\documentclass[11pt]{article}

    \usepackage{url}
    \usepackage{verbatim}
    \usepackage[titletoc]{appendix}
    \usepackage{bm}
    \usepackage{amsthm}
    \usepackage{amsmath}
    \usepackage{mathrsfs}
	\usepackage{amsfonts}
    \usepackage{nicefrac}
    \usepackage{longtable}
    \usepackage{color}
    \usepackage{graphicx, amssymb,graphics}
    \usepackage{algorithmicx,algorithm}
    \usepackage{algpseudocode}
    \usepackage{indentfirst}

    \usepackage{paralist}

    \usepackage{graphicx}
    \usepackage{subcaption}   
    \usepackage{multirow}

\usepackage{xcolor} 
\definecolor{tcr}{named}{red}        
\definecolor{tcb}{named}{blue}
\newcommand{\tcr}[1]{\textcolor{tcr}{#1}}  

\usepackage[colorlinks, linkcolor=red, anchorcolor=red, citecolor=red]{hyperref}

\newtheorem{thm}{Theorem}[section]

\newtheorem{lem}{Lemma}[section]

\newtheorem{proposition}{Proposition}
\newtheorem{remark}{Remark}

	\newcommand\be {\begin{equation}}
	\newcommand\ee {\end{equation}}

\numberwithin{equation}{section}

\graphicspath{{..}}

\title{Local Relaxation Fast Poisson Methods on Hierarchical Meshes} 
\date{\today}

\begin{document}
	
\author{
Zhenli Xu \thanks{School of Mathematical Sciences, MOE-LSC, and CMA-Shanghai, Shanghai Jiao Tong University, Shanghai 200240, China. (xuzl@sjtu.edu.cn)},~
\and
Qian Yin \thanks{Department of Applied Mathematics, the Hong Kong Polytechnic University, Hong Kong, China. (qqian.yin@polyu.edu.hk)},~
\and
Hongyu Zhou \thanks{Corresponding author. School of Mathematical Sciences, MOE-LSC, and CMA-Shanghai, Shanghai Jiao Tong University, Shanghai 200240, China. (zhouhongyu@sjtu.edu.cn)}
}

\maketitle

\begin{abstract}

The local relaxation algorithm is promising for fast solution of Poisson's equations, which computes the electric field distribution in a stepwise manner via local curl-free updates while strictly enforcing Gauss’s law. We propose a novel hierarchical local relaxation (HLR) method for speeding up the convergence of curl-free iterations. The local algorithm reformulates the Poisson's equation into the electric-field form and sweeps each cell to minimize the associate electric energy, avoiding the solution of linear systems. The updates with hierarchical meshes significantly accelerate the slow convergence of low-frequency components of the residual in the local curl-free update process. Convergence analysis is performed to obtain the convergence of the hierarchical relaxation approaches. Numerical results show that the HLR methods have the nice properties in accuracy and efficiency and the hierarchical construction leads to an overall computational complexity of $\mathcal{O}(N\log N)$ with respect to the number of grid points. Particularly, the applications in solving the Poisson--Boltzmann and Poisson--Nernst--Planck equations demonstrate the attractive performance for problems which frequently solve the Poisson's equations. 
 
\bigskip

\noindent
{\bf Key words and phrases}: Poisson's equation; Inhomogeneous dielectric permittivity; Hierarchical structure; Local curl-free relaxation.

\noindent
\end{abstract}

\section{Introduction}

The Poisson’s equation is a fundamental model describing the relation between source distributions and potential fields and arises ubiquitously in many fields \cite{clapham2007calcium,hockney2021computer,maury2001fat,van2006charge}. In molecular dynamics (MD) simulations, accurate evaluation of long-range electrostatic forces relies on the repeated solution of the Poisson's  equation or its equivalent formulations \cite{honig1995classical,roux2002theoretical,waisman1972mean}. Besides, it is coupled with the Nernst–Planck or Vlasov equations to describe ionic transport and the self-consistent evolution of charged particles under electric fields \cite{kim2005fully,zheng2011second}. In all cases, efficient and scalable solvers are essential for efficient simulations.  Repeatedly solving the Poisson's equation is a major computational for simulating large or spatially heterogeneous systems. The finite element methods (FEMs) \cite{dhatt2012finite,sorokina2022interpolated,zienkiewicz1977finite} and boundary element methods (BEMs) \cite{cheng2005heritage,desiderio2022cvem,hall1994boundary} are two classes of traditional methods for the Poisson's equation, but they are considered less efficient for applications of time-dependent problems. The FEM requires repeated matrix assembly when the dielectric permittivity evolves in time. The BEM relies on analytical fundamental solutions, which are generally unavailable for problems with spatially varying dielectric permittivity.  Local algorithms provide an efficient alternative by treating the electric field as the fundamental variable and enforcing Gauss’s law as a constraint, rather than directly solving for the electrostatic potential \cite{M:JCP:2002,rottler2004continuum}. Their local formulation accommodates time-dependent dielectric permittivity without extra cost compared to the constant permittivity case.

The local algorithm was first introduced for systems of discrete charges, for which Maggs and his collaborators \cite{M:JCP:2002,maggs2002local,rottler2004continuum} proposed to use curl-free relaxation for performing local Monte Carlo simulations of Coulomb systems. This idea was extended to MD methods \cite{fahrenberger2014simulation,pasichnyk2004coulomb} with converting globally coupled Coulomb interactions into locally updated electromagnetic fields, enforcing Gauss’s law and quasi-static constraints while preserving thermodynamic properties. Such local approaches enable linear scaling and are naturally parallelizable, while maintaining physically consistent particle–field dynamics. Subsequently, the same principle was applied to the solution of the Poisson–Boltzmann (PB) equation, by reformulating the PB model as a variational problem that minimizes a free energy functional and solving it using a Maggs-type algorithm \cite{BSD:PRE:2009,ZWL:PRE:2011}. Recently, Qiao {\it et al.} \cite{qiao2023maxwell,qiao2023structure} further generalized local algorithms to the Poisson–Nernst–Planck (PNP) model, developing the Maxwell–Amp\`ere Nernst–Planck (MANP) framework to describe charge transport. Besides, local algorithms provide an efficient and physically consistent strategy for particle-in-cell (PIC) simulations \cite{qiao2025intrinsic}, allowing electric fields and particle positions to be updated locally while strictly preserving Gauss’s law and supporting large-scale, parallel computations.
 
In existing literature, electric field is updated locally within each discretized cell. However, such relaxation update is less efficient in eliminating low-frequency components of the curl residual. Motivated by multigrid methods for addressing the slow convergence of Gauss–Seidel iterations for low-frequency errors in linear systems \cite{hemker1981introduction,mccormick1987multigrid,milaszewicz1987improving}, we propose an efficient  hierarchical local relaxation (HLR) method for solving Poisson’s equation. The HLR method decomposes the computational domain into a hierarchy of grids with different resolutions and performs curl-free relaxation at each level. Relaxation on fine grids efficiently damps high-frequency errors, while coarse-grid corrections effectively reduce low-frequency errors, leading to accelerated overall convergence. We show that this idea can be naturally extended to other algorithms based on the Maggs-type local formulation, including solvers for the PB equation and the MANP system. Moreover, by employing the method of image charges \cite{mesa1996image}, we  generalize the local algorithms to handle non-periodic boundary conditions.

Compared with traditional fast Poisson solvers such as FFT-based \cite{duhamel1990fast,feng2021fft,nussbaumer1982fast} and AMG-based \cite{ruge1987algebraic,stuben2001review} methods, the HLR method benefits from its local formulation, which naturally accommodates spatially and temporally varying dielectric permittivity. Unlike AMG-based approaches that require discretization into large sparse linear systems and repeated stiffness matrix assembly for time-dependent media, the HLR method avoids global algebraic formulation and matrix assembly altogether. Furthermore, by incorporating a hierarchical structure into the local curl-free relaxation, the HLR method efficiently removes low-frequency curl residuals, significantly accelerating convergence and improving overall computational efficiency. Due to these advantages, the HLR is promising for broad applications which require a fast Poisson solver.

The rest of this paper is organized as follows. In Section~\ref{remodeling}, we reformulate Poisson’s equation and show that solving the Poisson's equation is equivalent to computing the electric field. Section~\ref{la} introduces a class of local algorithms for the Poisson's equation and presents the HLR method. In Section~\ref{cn}, we provide a rigorous theoretical analysis of the proposed HLR method. Section~\ref{NumResults} presents numerical experiments that demonstrate the accuracy and efficiency of the proposed algorithm. Finally, conclusions are drawn in Section~\ref{s:Con}.

\section{Electric-field formulation of the Poisson's equation}\label{remodeling}

Consider the Poisson's equation of the form
\begin{equation}\label{poi}
    -\nabla\cdot(\varepsilon(\bm x)\nabla\phi)=\rho(\bm{x}),
\end{equation}
where $\phi$ is a scalar field, $\varepsilon$ is a spatially varying coefficient, and $\rho$ is a given source term. In charged systems, the Poisson's equation describes how the electrostatic potential is generated by a given charge distribution and mediated by the dielectric response of the medium with $\phi$ corresponding to the electric potential, $\varepsilon$ being the dielectric permittivity, and $\rho$ describing the charge density.
Accordingly, the electric field is given by $\bm E=-\nabla \phi$. With $\bm E$, the Poisson's equation can be written as,
\begin{subequations} \label{elctric field}
\begin{align}
    \text{Gauss's law: } &\nabla \cdot \left(\varepsilon\bm E\right) = \rho,\label{gaussn} \\
   \text{Curl free: }&\nabla \times \bm E = 0.\label{curlfreen}
\end{align}
\end{subequations}
For a bounded simply connected domain, the equivalence between \eqref{poi} and \eqref{elctric field} can be constructed by Helmholtz decomposition \cite{kirsch2015mathematical,monk2003finite}. 

The electrostatic energy of a charged system can be expressed as a functional of the electric field,
\begin{equation}\label{energyE}
\mathcal{F}_{\text{pot}}[\bm E] := \int_{\Omega} \frac{\varepsilon|\bm E|^2}{2} d\bm x.
\end{equation}

In electrostatics, the dielectric permittivity 
$\varepsilon$ is required to be positive to ensure a physically admissible and thermodynamically stable continuous medium, for which the electrostatic energy density remains positive definite. Proposition~\ref{p1} states the convex property of electrostatic energy, which enables the electrostatic field to be computed by solving a constrained energy functional minimization problem.

\begin{proposition}\label{p1}
The electrostatic energy 
 functional $\mathcal{F}_{\text{pot}}: L^{2}(\Omega,\mathbb{R}^n)\rightarrow\mathbb{R}$ is convex. Specifically, For any $\bm E_1,\bm E_2\in L^{2}(\Omega,\mathbb{R}^n)$, and $\forall \theta\in(0,1)$ we have
 \begin{equation}\label{convex}
     \mathcal{F}_{\text{pot}}(\theta\bm E_1+(1-\theta)\bm E_2)\leq \theta\mathcal{F}_{\text{pot}}(\bm E_1)+(1-\theta) \mathcal{F}_{\text{pot}}(\bm E_2).
 \end{equation}
The equality holds if and only if $\bm E_1=\bm E_2$.
\end{proposition}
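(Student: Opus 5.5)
The plan is to compute the convexity gap exactly; it reduces to a weighted squared $L^2$ norm of $\bm E_1-\bm E_2$. For any $\bm E_1,\bm E_2\in L^2(\Omega,\mathbb{R}^n)$ and $\theta\in(0,1)$, I expand pointwise the quadratic $|\theta\bm E_1+(1-\theta)\bm E_2|^2$. Using $\theta-\theta^2=\theta(1-\theta)$ and $(1-\theta)-(1-\theta)^2=\theta(1-\theta)$, this gives the pointwise identity
\begin{equation*}
\theta|\bm E_1|^2+(1-\theta)|\bm E_2|^2-|\theta\bm E_1+(1-\theta)\bm E_2|^2=\theta(1-\theta)|\bm E_1-\bm E_2|^2 .
\end{equation*}
I then multiply by $\varepsilon(\bm x)/2$ and integrate over $\Omega$ to obtain
\begin{equation*}
\theta\mathcal{F}_{\text{pot}}(\bm E_1)+(1-\theta)\mathcal{F}_{\text{pot}}(\bm E_2)-\mathcal{F}_{\text{pot}}(\theta\bm E_1+(1-\theta)\bm E_2)=\frac{\theta(1-\theta)}{2}\int_\Omega\varepsilon|\bm E_1-\bm E_2|^2\,d\bm x .
\end{equation*}
Since $\varepsilon>0$ and $\theta(1-\theta)>0$, the right-hand side is nonnegative, which proves \eqref{convex}.

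For the equality case, equality in \eqref{convex} holds if and only if $\int_\Omega\varepsilon|\bm E_1-\bm E_2|^2\,d\bm x=0$. The integrand is nonnegative, so this forces $\varepsilon|\bm E_1-\bm E_2|^2=0$ almost everywhere. Because $\varepsilon>0$, it follows that $\bm E_1=\bm E_2$ almost everywhere, which is equality in $L^2(\Omega,\mathbb{R}^n)$. Conversely, if $\bm E_1=\bm E_2$ then both sides of \eqref{convex} coincide trivially.

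The main obstacle is making sure all the integrals are finite and that the weighted norm really detects $\bm E_1\neq\bm E_2$. I would state the standing assumption $\varepsilon\in L^\infty(\Omega)$ with $\varepsilon>0$ almost everywhere. Boundedness guarantees that $\mathcal{F}_{\text{pot}}$ is finite on $L^2$, so the subtraction in the identity above is legitimate. Positivity almost everywhere gives strictness. A uniform lower bound $\varepsilon\ge\varepsilon_{\min}>0$ would make this cleaner, and it would also yield strong convexity with modulus $\varepsilon_{\min}$. All other steps are routine algebra.
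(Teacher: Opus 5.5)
Your proof is correct, but the key pointwise step differs from the paper's. Both arguments reduce to the pointwise inequality $|\theta\bm E_1+(1-\theta)\bm E_2|^2\le\theta|\bm E_1|^2+(1-\theta)|\bm E_2|^2$ and then integrate against the positive weight $\varepsilon/2$.

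The paper proves that inequality with a chain of two estimates. First it applies the triangle inequality $|\theta\bm E_1+(1-\theta)\bm E_2|\le\theta|\bm E_1|+(1-\theta)|\bm E_2|$. It then expands the square and bounds the cross term with $2|\bm E_1||\bm E_2|\le|\bm E_1|^2+|\bm E_2|^2$. It declares the equality case ``evident.'' You instead expand using the inner product and get the exact gap $\theta(1-\theta)|\bm E_1-\bm E_2|^2$.

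The identity makes the equality case immediate: it is just the vanishing of the weighted norm $\int_\Omega\varepsilon|\bm E_1-\bm E_2|^2\,d\bm x$. Along the paper's chain, you would have to check several things the paper leaves implicit:
\begin{itemize}
\item equality in the triangle inequality, which forces $\bm E_1$ and $\bm E_2$ to be nonnegatively parallel;
\item equality in the AM--GM step, which forces $|\bm E_1|=|\bm E_2|$;
\item that both hold almost everywhere, which requires passing from equality of the integrals to pointwise equality via $\varepsilon>0$.
\end{itemize}

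Your version also yields strong convexity with modulus $\varepsilon_{\min}$ for free. You also correctly read ``$\bm E_1=\bm E_2$'' as equality almost everywhere, which is the right meaning in $L^2$. The paper's route avoids inner products, but in Euclidean $\mathbb{R}^n$ that gains nothing, so your identity is the cleaner and more informative argument.
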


\begin{proof}
Since $\varepsilon>0$, \eqref{convex} is equivalent to the following inequality,
\begin{equation}\label{eql}
    |\theta\bm E_1+(1-\theta)\bm E_2|^2\leq\theta|\bm E_1|^2+(1-\theta)|\bm E_2|^2.
\end{equation}
It remains to show that~\eqref{eql} holds,
\begin{align*}
    |\theta\bm E_1+(1-\theta)\bm E_2|^2&\leq(|\theta\bm E_1|+|(1-\theta)\bm E_2|)^2\\
    &=\theta^2|\,\bm E_1|^2+(1-\theta)^2|\bm E_2|^2+2\theta(1-\theta)|\bm E_1||\bm E_2|\\
    &\leq \theta^2|\bm E_1|^2+(1-\theta)^2|\bm E_2|^2+\theta(1-\theta)(|\bm E_1|^2+|\bm E_2|^2)\\
    &=\theta|\bm E_1|^2+(1-\theta)|\bm E_2|^2.
\end{align*}
Equality holds if and only if $\bm E_1=\bm E_2$, which is evident.
\end{proof}

Consider the minimization problem of the above convex electrostatic energy with the Gauss's law constraint~\eqref{gaussn}. Since the electrostatic energy functional~\eqref{energyE} is convex and the Gauss’s law constraint is linear, the associated constrained minimization problem is a convex optimization problem.
Consequently, the Karush–Kuhn–Tucker (KKT) conditions \cite{izmailov2003karush} are necessary and sufficient condition for minimizer. The Lagrangian functional of the optimization problem reads
\begin{equation*}
    \mathcal{L}[\bm E, \phi] := \int_{\Omega} \frac{\varepsilon|\bm E|^2}{2} d \bm x - \int_{\Omega} \phi (\nabla \cdot (\varepsilon\bm E) - \rho) \, d \bm x,
\end{equation*}
where $\phi$ is the Lagrange multiplier. By the the stationarity condition and the primal feasibility condition, one has
\begin{equation*}
    \bm E = - \nabla \phi \quad \text{and} \quad \nabla \cdot (\varepsilon\bm E) = \rho.
\end{equation*}
This indicates that the Lagrange multiplier $\phi$  corresponds precisely to the electrostatic potential, and  the electric field at the minimizer satisfies the curl-free condition $\nabla\times \bm E=0$.


In fact, it can be shown that the electric field $\bm E$ resulting from the minimization of the above energy functional coincides with the negative gradient of the weak solution of the Poisson’s equation in the appropriate Sobolev space.
For simplicity, we assume that  $\Omega=(0,L_x)\times(0,L_y)$ is a domain with
periodic boundary conditions. Let $L_{per}^{p}(\Omega)$ and $H_{per}^{k}(\Omega)$ denote the space of all $\overline{\Omega}$-periodic functions on $\mathbb{R}^n$ whose restrictions onto $\Omega$ are in $L^{p}(\Omega)$ and $H^{k}(\Omega)$ respectively. Define
\begin{align*}
    &\mathring{L}_{per}^{p}(\Omega)=\{f(\bm x)\in L_{per}^{p}(\Omega):\mathscr{A}_{\Omega}(f)=0\},\\
    &\mathring{H}_{per}^{k}(\Omega)=\{f(\bm x)\in H_{per}^{k}(\Omega):\mathscr{A}_{\Omega}(f)=0\},
\end{align*}
where $\mathscr{A}_{\Omega}(f)$ denote the average 
\begin{equation*}
    \mathscr{A}_{\Omega}(f):=\frac{1}{|\Omega|}\int_{\Omega}f(\bm x)\ d \bm x.
\end{equation*}
The space of all $\overline{\Omega}$-periodic $C^k$-functions  are denote by $\ C_{per}^{k}(\overline{\Omega})$  for any $k\in\mathbb{Z}$, further define
\begin{align*}
    &H(\operatorname{div} ,\Omega)=\{\bm E\in L^{2}(\Omega):\nabla\cdot \bm E\in L^{2}(\Omega)\},\\
    &H_{per}(\operatorname{div},\Omega)=\text{the}\ H(\operatorname{div},\Omega)\text{-closure of}\ C_{per}^{1}(\overline{\Omega}) \ \text{functions restricted to}\ \Omega. 
\end{align*}
Given $\rho\in L_{per}^{2}(\Omega)$ and $\varepsilon \in L^{\infty}_{per}(\Omega)$, denote 
\begin{equation}
    S_{\rho}=\{\bm E\in H_{per}(\operatorname{div},\Omega):\nabla\cdot \varepsilon \bm E=\rho\}.
\end{equation}
Assume that $\rho\in\mathring{L}^{2}_{per}(\Omega)$ and $\varepsilon$ satisfies 
\begin{equation}\label{epsilon}
    0<\varepsilon_{\text{min}}<\varepsilon(\bm x)<\varepsilon_{\text{max}},\quad \forall \bm x \in \mathbb{R}^n,
\end{equation}
where $\varepsilon_{\text{min}}\  \text{and}\ \varepsilon_{\text{max}}$ are the lower and upper bounds. Theorem~\ref{th1} was provided in \cite{li2024finite}.

\begin{thm} (\cite{li2024finite})\label{th1}
    Let $\rho\in\mathring{L}^{2}_{per}(\Omega)$ and $\varepsilon \in L^{\infty}_{per}(\Omega)$ satisfy~\eqref{epsilon}. Define 
    \begin{align} &I[\phi]=\int_{\Omega}\bigg(\frac{\varepsilon}{2}|\nabla\phi|^{2}-\rho\phi\bigg)\ d \bm x\quad\forall\phi\in H_{per}^{1}(\Omega),\\
&F[\bm E]=\int_{\Omega}\frac{\varepsilon}{2}|\bm E|^{2}\ d \bm x\quad \forall \bm E\in S_{\rho}.\label{aa}
\end{align}
Then one has,
    \begin{enumerate}
        \item There exists a unique $\phi_{min}\in \mathring{H}^{1}_{per}(\Omega)$ such that $I[\phi_{min}]=\operatorname{min}_{\phi\in \mathring{H}^{1}_{per}(\Omega)}I[\phi]$, and it is the unique weak solution in $\mathring{H}^{1}_{per}(\Omega)$ to Poisson's equation~\eqref{poi}.
        \item There exists a unique $\bm E_{min}\in S_{\rho}$ such that $F[\bm E_{min}]=\operatorname{min}_{\bm E\in S_{\rho}}F[\bm E]$. Moreover, one has $\bm E_{min}=-\nabla\phi_{min}.$
    \end{enumerate}
\end{thm}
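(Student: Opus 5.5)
For part 1 the plan is the direct method of the calculus of variations, or equivalently Lax--Milgram, on the Hilbert space $\mathring{H}^{1}_{per}(\Omega)$. The bilinear form $a(\phi,\psi)=\int_\Omega \varepsilon\nabla\phi\cdot\nabla\psi\,d\bm x$ is bounded because $\varepsilon\le\varepsilon_{\max}$. It is coercive on $\mathring{H}^{1}_{per}(\Omega)$ by \eqref{epsilon} together with the Poincar\'e--Wirtinger inequality for mean-zero periodic functions, $\|\phi\|_{L^2}\le C_P\|\nabla\phi\|_{L^2}$. The functional $\psi\mapsto\int_\Omega\rho\psi\,d\bm x$ is bounded because $\rho\in L^2$.

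Lax--Milgram, in its symmetric Riesz form, then gives a unique $\phi_{min}\in\mathring{H}^{1}_{per}(\Omega)$ with $a(\phi_{min},\psi)=(\rho,\psi)$ for all $\psi\in\mathring{H}^{1}_{per}(\Omega)$. Since $a$ is symmetric, this is exactly the Euler--Lagrange equation of $I$. Strict convexity of $I$, which is Proposition~\ref{p1} applied to $\bm E=\nabla\phi$ plus a linear term, makes it equivalent to minimization.

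To pass from mean-zero test functions to all of $H^{1}_{per}(\Omega)$, I will use $\mathscr{A}_\Omega(\rho)=0$: constants contribute nothing on either side, so $\phi_{min}$ is the weak solution of \eqref{poi}. Uniqueness in $\mathring{H}^{1}_{per}$ follows by testing the difference of two solutions against itself and using coercivity.

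For part 2, I set $\bm E^*=-\nabla\phi_{min}$ and check admissibility, $\bm E^*\in S_\rho$. The weak equation says $\nabla\cdot(\varepsilon\bm E^*)=\rho$ in the periodic distributional sense. The next step is minimality. For any $\bm E\in S_\rho$, write $\bm E=\bm E^*+\bm D$, where $\nabla\cdot(\varepsilon\bm D)=0$ weakly, in the sense that $\int_\Omega\varepsilon\bm D\cdot\nabla\psi\,d\bm x=0$ for all $\psi\in H^1_{per}$. Then
\begin{equation*}
F[\bm E]=F[\bm E^*]+\int_\Omega\varepsilon\bm E^*\cdot\bm D\,d\bm x+\int_\Omega\frac{\varepsilon}{2}|\bm D|^2\,d\bm x .
\end{equation*}
The cross term equals $-\int_\Omega\varepsilon\nabla\phi_{min}\cdot\bm D\,d\bm x=0$, so $F[\bm E]\ge F[\bm E^*]$. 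Equality holds iff $\bm D=0$, using $\varepsilon>\varepsilon_{\min}>0$. This gives existence, uniqueness, and $\bm E_{min}=-\nabla\phi_{min}$ simultaneously. It matches the Lagrangian/KKT heuristic given earlier in the paper.

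The main obstacle is a function-space subtlety. $S_\rho$ is defined through $H_{per}(\operatorname{div},\Omega)$ with the constraint on $\nabla\cdot(\varepsilon\bm E)$, but membership is stated for $\bm E$, not $\varepsilon\bm E$. One must check two things:
\begin{itemize}
\item $\bm E^*$ actually lies in $H_{per}(\operatorname{div},\Omega)$, i.e.\ $\nabla\cdot\bm E^*\in L^2$, which is false in general for discontinuous $\varepsilon$;
\item the periodic integration by parts $\int\varepsilon\bm D\cdot\nabla\psi=-\int\psi\nabla\cdot(\varepsilon\bm D)$ is valid.
\end{itemize}
I would resolve this by interpreting the constraint weakly and periodically, i.e.\ requiring $\varepsilon\bm E\in H_{per}(\operatorname{div},\Omega)$. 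That amounts to reading $S_\rho$ as the set of $L^2$ periodic fields with $\int\varepsilon\bm E\cdot\nabla\psi=\int\rho\psi$ for all $\psi\in H^1_{per}$. Justifying the integration by parts then follows by density of $C^1_{per}$ functions in the closure defining $H_{per}(\operatorname{div},\Omega)$, where the periodic boundary terms cancel. With this reading, both the cross-term cancellation and admissibility of $\bm E^*$ are immediate.
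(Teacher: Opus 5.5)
Your proof is correct. The paper gives no proof of Theorem~\ref{th1} itself: it defers to \cite{li2024finite} and offers only the formal Lagrangian/KKT computation in Section~\ref{remodeling}. Your argument is the standard rigorous version of that heuristic. Part 1 uses Lax--Milgram with Poincar\'e--Wirtinger. Part 2 uses the splitting $F[\bm E]=F[\bm E^*]+\int_\Omega\frac{\varepsilon}{2}|\bm D|^2\,d\bm x$. The vanishing of the cross term is exactly the stationarity condition with multiplier $\phi_{min}$. The quadratic remainder replaces the infinite-dimensional appeal to convexity and KKT sufficiency. This yields existence, uniqueness and the identification $\bm E_{min}=-\nabla\phi_{min}$ at once, with no separate direct-method argument on $S_\rho$.

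The function-space issue you flag is genuine, not pedantic. For $\varepsilon$ merely in $L^\infty$, $\varepsilon\,\partial_n\phi_{min}$ is continuous across a discontinuity of $\varepsilon$ but $\partial_n\phi_{min}$ is not. Hence $\nabla\cdot(\nabla\phi_{min})\notin L^2$ in general. With $S_\rho$ read literally ($\bm E\in H_{per}(\operatorname{div},\Omega)$), the claimed minimizer $-\nabla\phi_{min}$ would then not even be admissible. Imposing $\varepsilon\bm E\in H_{per}(\operatorname{div},\Omega)$ is the displacement formulation: $\bm D=\varepsilon\bm E$ is the unknown and the energy is $\int_\Omega|\bm D|^2/(2\varepsilon)\,d\bm x$. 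That is the setting of \cite{li2024finite}, so your reading is the right one.

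One small correction: the weak form of $\nabla\cdot(\varepsilon\bm E)=\rho$ is $-\int_\Omega\varepsilon\bm E\cdot\nabla\psi\,d\bm x=\int_\Omega\rho\psi\,d\bm x$ for all $\psi\in H^1_{per}(\Omega)$. With the sign as you wrote it, $\bm E^*=-\nabla\phi_{min}$ would satisfy the constraint with $-\rho$ in place of $\rho$. This does not affect the homogeneous condition on $\bm D$ or the cross-term cancellation.
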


This theorem demonstrates that solving the Poisson's equation~\eqref{poi} is equivalent to solving the minimization problem in~\eqref{aa}. And the condition enforcing restriction $\bm E\in S_{\rho}$ is precisely the Gauss's law constraint~\eqref{gaussn}.

\section{Local relaxation method}\label{la}
For simplicity of presentation, all theoretical derivations, algorithmic steps, and notation definitions in this section are explicitly limited to two-dimensional configurations. It is crucial to emphasize that the proposed numerical algorithms are applicable to three-dimensional problems without introducing any new challenges and extra structural adjustments, and corresponding 3D numerical experiments are included in Section~\ref{NumResults}.
  
\subsection{Discretization}
The computational domain $\Omega=(0,L_x)\times(0,L_y)$ is covered by a uniform grid with grid spacings $\Delta x$ and $\Delta y$ in two coordinate directions. Let $\Delta \Omega = \Delta x \Delta y$ denote the area of a cell. Let $N_x$ and $N_y$  be the number of grid points in each dimension.  
Denote  $\Delta x\mathbb{Z}\times\Delta y\mathbb{Z}=\{(i\Delta x,j\Delta y):i,j\in\mathbb{Z}\}$ and $\Delta x(\mathbb{Z}+1/2)\times\Delta y(\mathbb{Z}+1/2)=\{((i+\frac{1}{2})\Delta x,(j+\frac{1}{2})\Delta y):i,j\in\mathbb{Z}\}$.
For any grid function $f:\Delta x\mathbb{Z}\times\Delta y\mathbb{Z}\rightarrow\mathbb{R}$, one denotes $f_{ij}=f(i\Delta x,j\Delta y)$. 
For $\overline{\Omega}$-periodic grid functions, one defines spaces
\begin{align}
    &V_{h}=\{\overline{\Omega}\text{-periodic grid functions }f :\Delta x\mathbb{Z}\times\Delta y\mathbb{Z} \rightarrow \mathbb{R}\},\\
   \text{and}\quad  &\mathring{V}_{h}=\{f\in V_h:\mathscr{A}_{h}(f)=0\},
\end{align}
where $\mathscr{A}_{h}$ denotes the discrete average 
\begin{equation}
    \mathscr{A}_{h}(f)=\frac{1}{N_xN_y}\sum_{i=0}^{N_x-1}\sum_{j=0}^{N_y-1}f_{ij}.
\end{equation}

Given $\bm E=(E_x,E_y)$, one defines a discrete electric field as a vector-valued function 
\begin{equation}
\mathbb{E}_{i+\frac{1}{2},j+\frac{1}{2}}=(E_{i+\frac{1}{2},j},E_{i,j+\frac{1}{2}})\quad \forall i,j\in\mathbb{Z},
\end{equation}
where $E_{i+\frac{1}{2}, j}$ represents the value of $E_x$ at the grid point $((i + \frac{1}{2}) \Delta x, j \Delta y)$, and  $E_{i, j+\frac{1}{2}}$ represents the value of $E_y$ at the grid point $(i \Delta x, (j+ \frac{1}{2}) \Delta y)$. Denote
\begin{equation}
     Y_{h}=\{\overline{\Omega}-\text{periodic functions }  \mathbb{E}:\Delta x(\mathbb{Z}+1/2)\times\Delta y(\mathbb{Z}+1/2) \rightarrow \mathbb{R}^2\}.
\end{equation}
For any $\mathbb{E}\in Y_h$, define the discrete average 
\begin{equation*} \mathscr{A}_h(\mathbb{E})=\frac{1}{N_xN_y}\sum_{i=0}^{N_x-1}\sum_{j=0}^{N_y-1}(E_{i+\frac{1}{2},j},E_{i,j+\frac{1}{2}}),
\end{equation*}
and the norm
\begin{equation*}
||\mathbb{E}||_{h}=\sqrt{\Delta\Omega\sum_{i=0}^{N_x-1}\sum_{j=0}^{N_y-1}\left(E_{i,j+\frac{1}{2}}^2+E_{i+\frac{1}{2},j}^2\right)}.
\end{equation*}
The discrete divergence and curl on $Y_h$ are defined by

\begin{align*}(\nabla_h\cdot\mathbb{E})_{i,j}&=\frac{E_{i+\frac{1}{2},j}-E_{i-\frac{1}{2},j}}{\Delta x}+\frac{E_{i,j+\frac{1}{2}}-E_{i,j-\frac{1}{2}}}{\Delta y},\\
(\nabla_h\times\mathbb{E})_{i+\frac{1}{2},j+\frac{1}{2}}&=\frac{E_{i+1,j+\frac{1}{2}}-E_{i,j+\frac{1}{2}}}{\Delta x}-\frac{E_{i+\frac{1}{2},j+1}-E_{i+\frac{1}{2},j}}{\Delta y}.
\end{align*}

Given $\varepsilon\in C_{per}(\overline{\Omega})$ satisfying~\eqref{epsilon}, define a new average on $\Delta x(\mathbb{Z}+1/2)\times\Delta y(\mathbb{Z}+1/2)$ by
\begin{equation*}
    \varepsilon_{i+\frac{1}{2},j}=\frac{\varepsilon_{i,j}+\varepsilon_{i+1,j}}{2},\quad\varepsilon_{i,j+\frac{1}{2}}=\frac{\varepsilon_{i,j}+\varepsilon_{i,j+1}}{2}.
\end{equation*}
Given $\rho^{h} \in \mathring{V}_{h}$ the discrete charge density, Gauss's law \eqref{gaussn} can be discretized as
\begin{equation}\label{discretgauss}
\nabla_{h}\cdot \left(\varepsilon\mathbb{E}\right)=\rho^{h}.
\end{equation}
And the electrostatic energy~\eqref{energyE} admits an approximation of the form 
\begin{equation}\label{energyEdis}
    \mathcal{F}^{h}[\mathbb{E}] = \frac{\Delta\Omega}{2}\sum_{i=0}^{N_x-1}\sum_{j=0}^{N_y-1}(\varepsilon_{i,j+\frac{1}{2}} E_{i,j+\frac{1}{2}}^2+\varepsilon_{i+\frac{1}{2},j}E_{i+\frac{1}{2},j}^2),
\end{equation}
which is to be minimized under the constraint of the discrete Gauss's law \eqref{discretgauss}.

\subsection{Single mesh relaxation}
The local algorithm begins with initializing an electric field $\mathbb{E}^{(0)}$ that satisfies the discrete form of Gauss's law~\eqref{discretgauss}. In certain applications where the Poisson's equation must be solved at every time step, the electric field that satisfies Gauss’s law can be obtained through time evolution from the electric field at the previous time step. In contrast, in a broader range of scenarios, this electric field is unknown a priori and thus requires explicit computation. Notably, the procedure for constructing an electric field $\mathbb{E}^{(0)}$ is not unique, one feasible approach is presented in \cite{BSD:PRE:2009}. Let $\overline{\rho}_{j}=\sum_{i=0}^{N_{x}-1}\rho^h_{ij}/N_{x}$ denote the average charge over the $j$th $x$-aligned line. For $i = 0, \cdots, N_x-1$, the $y$-component of the electric field is then computed by
\begin{equation}\label{getE1}
    \varepsilon_{i,j+\frac{1}{2}} E_{i,j+\frac{1}{2}} = \varepsilon_{i,j-\frac{1}{2}} E_{i,j-\frac{1}{2}}+\Delta y \overline{\rho}_{j}, \quad  \ j = 1, \cdots, N_y-1,
\end{equation}
with $E_{i,\frac{1}{2}}=0$ for all $i=0,\cdots,N_x-1.$ Subsequently, the $x$-directional component of the electric field is updated by manipulating the discrete Gauss’s law~\eqref{discretgauss}. For $j = 0, \cdots, N_y-1$, the computation follows
\begin{equation}\label{getE3}
   \varepsilon_{i+\frac{1}{2},j} E_{i+\frac{1}{2},j} = \varepsilon_{i-\frac{1}{2},j} E_{i-\frac{1}{2},j} + \Delta x (\rho^h_{i,j}-\overline{\rho}_{j}), \quad i = 1, \cdots, N_x-1,  
\end{equation}
with $E_{\frac{1}{2},j}=0, ~j=0,\cdots,N_y-1.$ 
Crucially, the discrete electric field $\mathbb{E}^{(0)}$
constructed through the sequential application of \eqref{getE1} and \eqref{getE3} is guaranteed to satisfy the discrete Gauss’s law~\eqref{discretgauss}, aligning with the initial requirement for the algorithm.  

The next step of the local algorithm involves iteratively updating the electric field to minimize the discrete energy functional $\mathcal{F}^{h}$~\eqref{energyEdis} with constraint of Gauss's law. This minimization ensures the electric field to be curl-free, thus coinciding with the exact solution to Poisson’s equation. It is emphasized that the local algorithm naturally and rigorously preserves Gauss’s law, which is achieved by designing a specialized numerical update scheme for electric field. 
The minimization of $\mathcal{F}^{h}$ is implemented via local updates \cite{M:JCP:2002,maggs2002local} to the discrete electric field within each computational cell, so we call it as single mesh relaxation method. Starting from the initial field ${\mathbb{E}^{(0)}}$, the electric field is updated by introducing a rotational flux $\eta$ along the four edges of each cell, illustrated in Fig.~\ref{updatepla}. Notably, this same flux $\eta$ is chosen uniformly across the four edges of a single cell — a specific design constraint that preserves the discrete Gauss’s law strictly. For details, the process is as follows:
\begin{equation}\label{change0}
    \begin{aligned}
E_{i,j+\frac{1}{2}} &\leftarrow E_{i,j+\frac{1}{2}}-\frac{\eta}{\varepsilon_{i,j+\frac{1}{2}}\Delta x}, \\
E_{i+1,j+\frac{1}{2}} &\leftarrow E_{i+1,j+\frac{1}{2}}+\frac{\eta}{\varepsilon_{i+1,j+\frac{1}{2}}\Delta x},\\
E_{i+\frac{1}{2},j} &\leftarrow E_{i+\frac{1}{2},j}+\frac{\eta}{\varepsilon_{i+\frac{1}{2},j}\Delta y},\\
E_{i+\frac{1}{2},j+1} &\leftarrow E_{i+\frac{1}{2},j+1}-\frac{\eta}{\varepsilon_{i+\frac{1}{2},j+1}\Delta y}.
\end{aligned}
\end{equation}

\begin{figure}[H]
  \centering
  \includegraphics[width=0.7\linewidth]{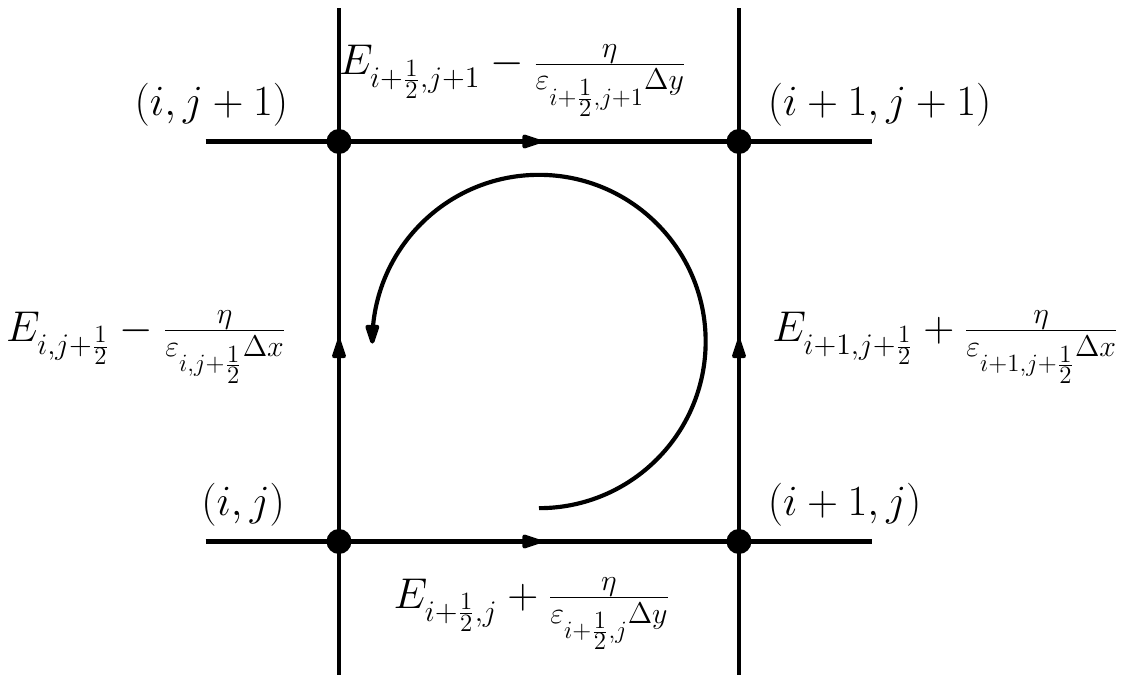}
  \caption{Schematic of the electric-field update on a single grid cell $(i,j)$ with flux $\eta$. }
  \label{updatepla}
\end{figure}
It can be simply proved that discrete Gauss's law~\eqref{discretgauss} defined at the four nodes of this updating cell still strictly holds. With this update, the energy change is given by
\begin{equation}\label{delta}
\begin{aligned}
\delta \mathcal{F}^{h}[\eta]=& \frac{\eta^2}{2} \left[ \frac{\Delta x}{ \Delta y} \left( \frac{1}{\varepsilon_{i+\frac{1}{2},j}} + \frac{1}{\varepsilon_{i+\frac{1}{2},j+1}}\right) +\frac{\Delta y}{\Delta x} \left( \frac{1}{\varepsilon_{i,j+\frac{1}{2}}} + \frac{1}{\varepsilon_{i+1,j+\frac{1}{2}}} \right) \right]  \\
    &+ \eta \bigg[\Delta x\bigg( E_{i+\frac{1}{2},j} - E_{i+\frac{1}{2},j+1}\bigg)+ \Delta y \left( E_{i+1,j+\frac{1}{2}} - E_{i,j+\frac{1}{2}}\right) \bigg].
\end{aligned}  
\end{equation}
This is a quadratic form of $\eta$, thus the minimum value can be exactly obtained when
\begin{equation}\label{eta}
    \eta = -\frac{\Delta y (\Delta x)^2 \left(E_{i+\frac{1}{2},j} - E_{i+\frac{1}{2},j+1}\right) + \Delta x (\Delta y)^2 \left( E_{i+1,j+\frac{1}{2}}- E_{i,j+\frac{1}{2}}\right)}
    {(\Delta x)^2 \left( \frac{1}{\varepsilon_{i+\frac{1}{2},j}} + \frac{1}{\varepsilon_{i+\frac{1}{2},j+1}}\right) + (\Delta y)^2 \left( \frac{1}{\varepsilon_{i,j+\frac{1}{2}}} + \frac{1}{\varepsilon_{i+1,j+\frac{1}{2}}} \right)}.
\end{equation}
This formula provides the optimal flux for the change of electric field. Such a local update traverses all discrete grid cells, and iterative updates continue until the stopping condition is met.

The single mesh update is not sufficient \cite{levrel2005monte} to ensure ergodicity in the sampling for Monte Carlo simulation. It may not force the electrostatic energy \eqref{energyEdis} reach to its minimum~\cite{li2024finite}. To address this issue, a line shift update \cite{jiang2018improved,li2024finite} has been  introduced, which uses the principle of ensuring that the flux entering a discrete node is equal to that exiting it. This design enables the strict preservation of the discrete Gauss’s law, while the optimal flux value is derived from energy minimization calculations.
Specifically, for $x$-direction, the update of electric field at $j$th $x$ line with flux $\eta_x$ is
\begin{equation}\label{updateetax}
E_{i+\frac{1}{2},j} \leftarrow E_{i+\frac{1}{2},j}+\frac{\eta_x}{\varepsilon_{i+\frac{1}{2},j}}, \qquad i=0, 1,...,N_x-1.
\end{equation}
The energy change after this update is
\begin{equation}
\delta \mathcal{F}^h[\eta_x]=\Delta x\Delta y
\bigg[\frac{\eta_x^2}{2}\bigg(\sum_{i=0}^{N_x-1} \frac{1}{\varepsilon_{i+\frac{1}{2}, j}}\bigg)+\eta_x\bigg(\sum_{i=0}^{N_x-1} E_{i+\frac{1}{2}, j}\bigg)\bigg].
\end{equation}
The energy reaches its minimum value, when
\begin{equation}\label{eta_x}
    \eta_x = -\sum_{i=0}^{N_x-1} E_{i+\frac{1}{2}, j}\bigg{/}\sum_{i=0}^{N_x-1} \frac{1}{\varepsilon_{i+\frac{1}{2}, j}}.
\end{equation}
Similarly, for $y$-direction, the process of update at $i$th $y$ line with $\eta_y$ is 
\begin{equation}\label{updateetay}
E_{i,j+\frac{1}{2}} \leftarrow E_{i,j+\frac{1}{2}}+\frac{\eta_y}{\varepsilon_{i,j+\frac{1}{2}}}, \qquad j=0, 1,...,N_y-1.
\end{equation}
The optimal flux $\eta_y$ is
\begin{equation}\label{eta_y}
    \eta_y = -\sum_{j=0}^{N_y-1} E_{i, j+\frac{1}{2}}\bigg{/}\sum_{j=0}^{N_y-1} \frac{1}{\varepsilon_{i,j+\frac{1}{2}}}.
\end{equation}

To summarize, the first step of the local algorithm involves the construction of an initial electric field that strictly adheres to the discrete Gauss’s law. The subsequent step includes the minimization of the electrostatic energy functional, which is tailored to achieve a curl-free electric field configuration while rigorously preserving the discrete Gauss’s law throughout the entire process. 
The local algorithm with single mesh relaxation is outlined in Algorithm~\ref{alg:plaquette}. 

\begin{algorithm}[H]
\caption{Local Algorithm with Single Mesh  Relaxation}
\label{alg:plaquette}

\begin{algorithmic}[1]
\State \textbf{Step 1.} Initialize field $\mathbb{E}^{(0)}$to satisfy the discrete Gauss's law~\eqref{discretgauss}. Set $m = 0$.

\Statex
\State \textbf{Step 2.} Curl-free relaxation $\mathbb{E} := \mathbb{E}^{(m)}$.
\State Update $\mathbb{E}$ along each cell via \eqref{change0}.
    \State Update $\mathbb{E}$ by the line shift via ~\eqref{updateetax} and ~\eqref{updateetay}.
\Statex
\State \textbf{Step 3.} Termination check:
    \If{$\delta\mathcal{F}^h<\varepsilon_{tol}$}
        \State \textbf{stop}
    \Else
        \State Set $\mathbb{E}^{(m+1)} = \mathbb{E}$, $m \leftarrow m + 1$.
        \State \textbf{Go to} Step 2
    \EndIf
\end{algorithmic}
\label{a0}
\end{algorithm}

\begin{remark}
For non-periodic boundaries, we employ the image charges method (ICM) \cite{liang2021high}  to handle Neumann and Dirichlet conditions. By mapping the system to an equivalent periodic configuration, the electric field can then be computed using the local method developed here. 
\end{remark}

\subsection{Hierarchical mesh relaxation}\label{algos}

The single mesh relaxation updates the electric field sequentially in each cell of the finest computational grid. A critical limitation of this approach, however, is that it yields only slow reduction of rotational errors with particularly in low-frequency error components, thereby restricting the inefficient rate at which the electrostatic energy can be minimized. Recall that the core principle of such local algorithms is to design specialized update fluxes that drive the most rapid possible reduction in electrostatic energy, while rigorously preserving the discrete Gauss’s law at all times. In fact, without considering the curl-free condition~\eqref{curlfreen}, the general solution of Eq.~\eqref{gaussn} can be represented as 
\begin{equation*}
\varepsilon\bm E=-\varepsilon\nabla\phi+\nabla\times \mathbf{Q},
\end{equation*}
where $\phi$ satisfies $-\nabla\cdot (\varepsilon \nabla \phi)=\rho$, and $\mathbf{Q}$ is the rotational degree of freedom. Based on the orthogonality of the two components of $\bm E$, the electrostatic energy~\eqref{energyE} can be written as
\begin{equation*}
   \mathcal{F}_{\text{pot}}[\bm E] := \int_{\Omega} \frac{\varepsilon}{2}[(\nabla\phi)^2+(\nabla\times\mathbf{Q})^2] d \bm x.
\end{equation*}
From this, local algorithms essentially keep the gradient terms, namely, Gauss's law unchanged, and the minimization of energy is equivalent to the minimization of the energy associated with rotational term.
Inspired by geometric multigrid methods~\cite{hemker1981introduction,mccormick1987multigrid}, a well-established technique that accelerates the iterative solution of linear systems by leveraging a hierarchy of computational grids with different spatial scales, we propose a HLR method  by integrating a multi-level hierarchical structure into the framework. This novel design is expected to effectively accelerate the propagation of rotational degrees of freedom across the entire computational domain while keeping the principle of local algorithms.

In the HLR, the computational domain $\Omega$ is partitioned into a set of nested grid blocks across multiple refinement levels $L_k,~k=1,\ldots,M$, where $L_1$ represents the coarsest level and $L_M$ the finest level. In specific, the grid partitioning at level $L_k$ is defined as:
$$
\Omega=\bigcup_{m,n=1}^{2^{k}} \Omega_{m,n}^{(k)},
$$
where $\Omega_{m,n}^{(k)}$ denotes the $(m,n)$-th grid block at refinement level $L_k$, with each block at level $L_k$ being partitioned into $2\times2$ sub-blocks to form the grid at level $L_{k+1}$ (i.e. $\Omega_{m,n}^{(k)}=\cup _{a,b=1}^2\Omega_{2m-a,2n-b}^{(k+1)}$). Notice that $2^M=N_x=N_y$ if the same resolutions are used in $x$- and $y$- coordinates.

Based on this hierarchical grid structure, we propose two HLR methods, forward HLR and zigzag HLR, within the local algorithm framework, as illustrated in  Fig.~\ref{dd} (a) for a single iteration. Unlike the single mesh relaxation method, where curl-free relaxation is restricted to the finest grid ($L_M$), the HLR method shifts the curl-free relaxation from individual finest grid cells to grid blocks at different levels. Performing an update on any grid block $\Omega^{(k)}_{m,n}$ implies executing a local process on the grid nodes covered by that block at its corresponding resolution.

The forward HLR method follows a strictly sequential, layer-by-layer update path from coarse scales to fine scales grids. Starting at the coarsest grid level $L_1$ (e.g., the initial $2\times 2$ block grid), the algorithm completes the field update on the current coarse level, then proceeds forward and irreversibly to the next finer level $L_2$. After finishing updates on $L_2$, it moves directly to the finer level $L_3$, and repeats this process until reaching the finest grid level $L_{\max}$.
This strategy follows a one-way progressive pathway:
$$L_1\to L_2\to \cdots \to L_M.$$
No backward revisit or correction to coarser levels is performed during the entire procedure.

In contrast, the zigzag HLR scheme adopts a reciprocal, backtracking update pathway that repeatedly travels from coarse to fine, then back to coarse, and forward to fine again, forming a 
zigzag trajectory across grid levels.
After updating the coarsest level $L_1$ and refining to the finer levels $L_2$ and $L_3$, the algorithm intentionally revisits the coarser level $L_2$ to apply secondary corrections based on the fine-scale information. It then refines again to $L_3$ and proceeds toward $L_4$, with similar back-and-forth corrections at each intermediate level. Its typical pathway can be summarized as:
$$L_1\to L_2 \to L_3\to L_2\to L_3\to L_4\to \cdots \to L_M.$$
This zigzag backtracking enables two-way information exchange between coarse and fine grids, rather than the one-way propagation used in the forward scheme.




\begin{figure}[ht]
    \centering 
    \captionsetup[sub]{labelformat=empty}
    \begin{subfigure}{0.6\textwidth} 
        \centering
        \includegraphics[width=\linewidth]{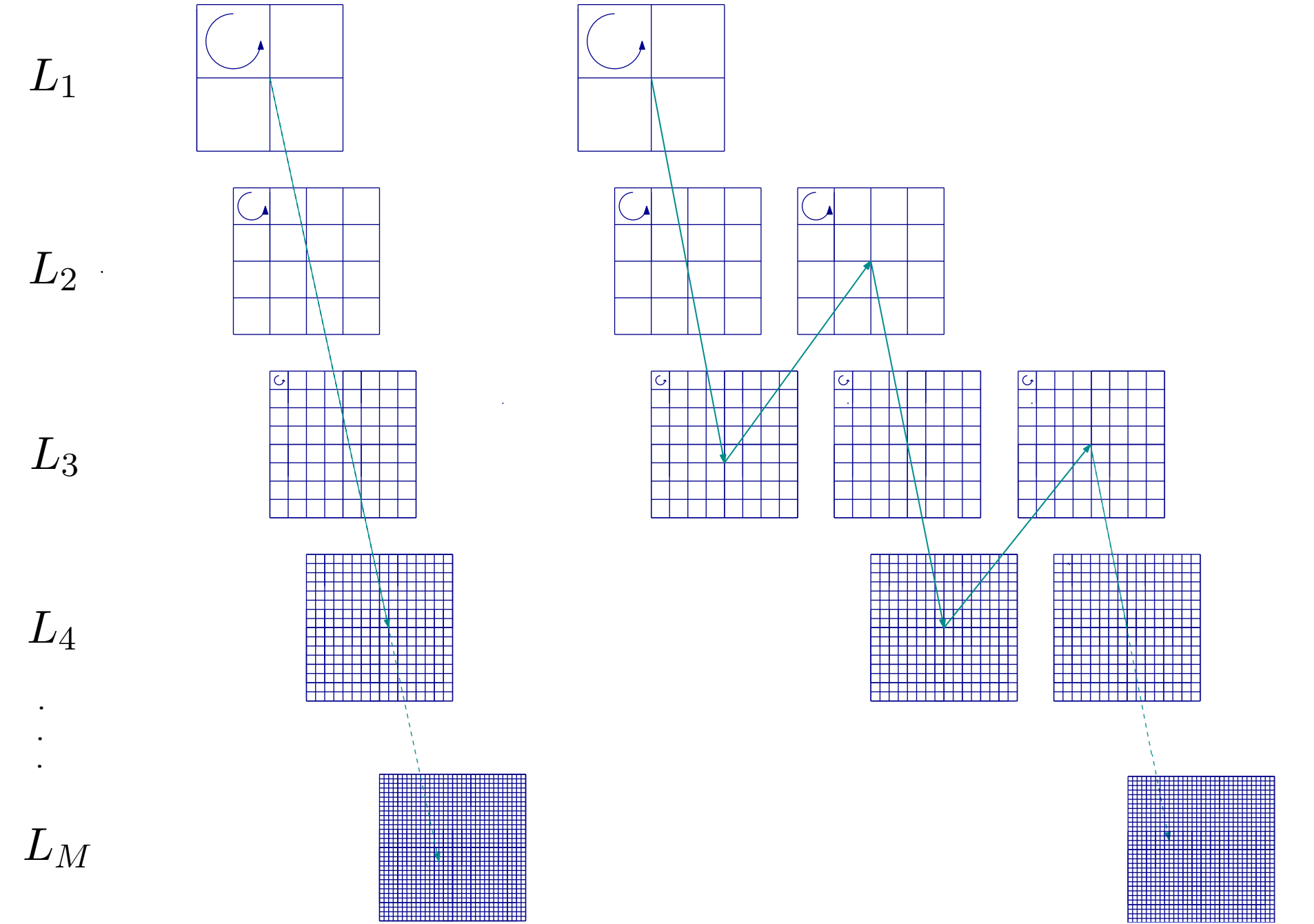} 
        \caption{(a)} 
        \label{dd1}
    \end{subfigure}
    \begin{subfigure}{0.36\textwidth}
        \centering \includegraphics[width=\linewidth]{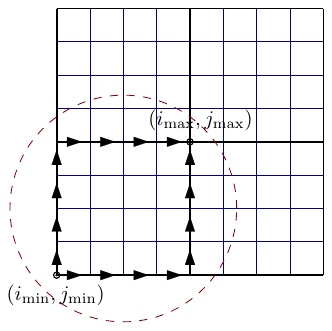}
        \caption{(b)}
        \label{dd2}
       
    \end{subfigure}
    \caption{(a) Schematic illustration of relaxation on hierarchical meshes with forward (left), and zigzag (right) HLR. (b) Schematic of the electric-field update on a coarse-grid block covering multiple finest cells.}
\label{dd}
    \end{figure}

The procedures for local curl-free relaxation on coarse grids are analogous to those on finest grids described in the previous section. At the grid level $L_k$, the curl-free relaxation is performed on grid blocks $\Omega^{(k)}_{m,n},~m,n=1,\cdots,2^k$,
where each block is uniquely specified by its lower-left coordinate $(i_{\min},j_{\min})=\left(2^{M-k}(m-1),2^{M-k}(n-1)\right)$ and upper-right corner coordinate 
$(i_{\max},j_{\max})=(2^{M-k}m,2^{M-k}n)$ in the global finest-grid index system.
For any given block $\Omega^{(k)}_{m,n},~m,n=1,\cdots,2^k$, the curl-free relaxation update is applied exclusively to all electric field components defined on the block boundary edges, using the same rotational flux–based strategy established earlier. As illustrated in Fig.~\ref{dd} (b), the update procedure for a block with lower-left coordinate $(i_{\min}, j_{\min})$ and upper-right coordinate $(i_{\max}, j_{\max})$ proceeds as follows:

\begin{equation}\label{changecu}
    \begin{aligned}
E_{i_{\min},j+\frac{1}{2}} &\leftarrow E_{i_{\min},j+\frac{1}{2}}-\frac{\eta}{\varepsilon_{i_{\min},j+\frac{1}{2}}\Delta x},\quad j_{\min}\leq j<j_{\max}, \\
E_{i_{\max},j+\frac{1}{2}} &\leftarrow E_{i_{\max},j+\frac{1}{2}}+\frac{\eta}{\varepsilon_{i_{\max},j+\frac{1}{2}}\Delta x},\quad j_{\min}\leq j<j_{\max},\\
E_{i+\frac{1}{2},j_{\min}} &\leftarrow E_{i+\frac{1}{2},j_{\min}}+\frac{\eta}{\varepsilon_{i+\frac{1}{2},j_{\min}}\Delta y},\quad i_{\min}\leq i<i_{\max},\\
E_{i+\frac{1}{2},j_{\max}} &\leftarrow E_{i+\frac{1}{2},j_{\max}}-\frac{\eta}{\varepsilon_{i+\frac{1}{2},j_{\max}}\Delta y},\quad i_{\min}\leq i<i_{\max}.
\end{aligned}
\end{equation}

This update preserves the discrete Gauss’s law defined on all grids of the boundary of this block. Similarly, the energy change after the electric displacement update is
\begin{equation}\label{deltacu}
\begin{aligned}
\delta \mathcal{F}^h[\eta]=& \frac{\eta^2}{2} \left[ \frac{\Delta x}{ \Delta y} \sum_{i=i_{\min}}^{i_{\max}-1}\left( \frac{1}{\varepsilon_{i+\frac{1}{2},j_{\min}}} + \frac{1}{\varepsilon_{i+\frac{1}{2},j_{\max}}}\right) +\frac{\Delta y}{\Delta x} \sum_{j=j_{\min}}^{j_{\max}-1}\left( \frac{1}{\varepsilon_{i_{\min},j+\frac{1}{2}}} + \frac{1}{\varepsilon_{i_{\max},j+\frac{1}{2}}} \right) \right]  \\
    &+ \eta \bigg[\Delta x\sum_{i=i_{\min}}^{i_{\max}-1}\bigg( E_{i+\frac{1}{2},j_{\min}} - E_{i+\frac{1}{2},j_{\max}}\bigg)+ \Delta y \sum_{j=j_{\min}}^{j_{\max}-1}\left( E_{i_{\max},j+\frac{1}{2}} - E_{i_{\min},j+\frac{1}{2}}\right) \bigg].
\end{aligned}  
\end{equation}
The maximization of the energy change gives the exact solution of $\eta$:
\begin{equation}\label{etacu}
    \eta = -\frac{\Delta y (\Delta x)^2 \sum_{i=i_{\min}}^{i_{\max}-1}\bigg( E_{i+\frac{1}{2},j_{\min}} - E_{i+\frac{1}{2},j_{\max}}\bigg) + \Delta x (\Delta y)^2 \sum_{j=j_{\min}}^{j_{\max}-1}\left( E_{i_{\max},j+\frac{1}{2}} - E_{i_{\min},j+\frac{1}{2}}\right)}
    {(\Delta x)^2 \sum_{i=i_{\min}}^{i_{\max}-1}\left( \frac{1}{\varepsilon_{i+\frac{1}{2},j_{\min}}} + \frac{1}{\varepsilon_{i+\frac{1}{2},j_{\max}}}\right) + (\Delta y)^2 \sum_{j=j_{\min}}^{j_{\max}-1}\left( \frac{1}{\varepsilon_{i_{\min},j+\frac{1}{2}}} + \frac{1}{\varepsilon_{i_{\max},j+\frac{1}{2}}} \right)}.
\end{equation}

The local algorithms with HLR methods are summarized in Algorithm~\ref{alg:multigrid}.


\begin{algorithm}[H]
\caption{Local Algorithm with HLR Methods.}
\label{alg:multigrid}

\begin{algorithmic}[1]
\State \textbf{Step 1.} Initialize displacement $\mathbb{E}^{(0)}$ satisfy the discrete Gauss's law~\eqref{discretgauss}. Set $m = 0$.

\Statex
\State\textbf{Step 2.} Curl-free Relaxation $\mathbb{E} := \mathbb{E}^{(m)}$.

\Statex\underline{\textbf{Forward}}:
\For{$\text{Level}:~L_k=L_1$ \textbf{to} $L_M$}
        \For{Each block: $m,n=1$ \textbf{to} $2^k$}
            \State Update $\mathbb{E}$ to get $\mathbb{E}^{*}$ via \eqref{changecu} and  $\mathbb{E} \leftarrow \mathbb{E}^{*}$.
            
        \EndFor
    \EndFor
    
\Statex\underline{\textbf{Zigzag}}:
\For {$\text{Level}:~L_\ell=L_1$ \textbf{to} $L_{M-2}$}
    \For{$\text{Level}:~L_k=L_\ell$ \textbf{to} $L_{\ell+2}$}
        \For{Each block: $m,n=1$ \textbf{to} $2^k$}
            \State Update $\mathbb{E}$ to get $\mathbb{E}^{*}$ via \eqref{changecu} and  $\mathbb{E} \leftarrow \mathbb{E}^{*}.$
            \EndFor
        \EndFor
    \EndFor
  \State Update $\mathbb{E}$ by the line shift via ~\eqref{updateetax} and ~\eqref{updateetay}.
\Statex
\State \textbf{Step 3.} Termination check:
    \If{$\delta\mathcal{F}^h<\varepsilon_{tol}$}
        \State \textbf{stop}
    \Else
        \State Set $\mathbb{E}^{(m+1)} = \mathbb{E}$, $m \leftarrow m + 1$.
        \State \textbf{Go to} Step 2
    \EndIf
\end{algorithmic}
\label{a1}
\end{algorithm}

\section{Convergence Analysis}
~\label{cn}
 
In this section, we will present the proof of convergence for the new proposed HLR relaxation  method, following the idea of~\cite{li2024finite}. For convenience, one assumes that $L_x=L_y=L$ and $N_x=N_y=N=2^M$, such that $\Delta x=\Delta y= h=L/{2^M}$. Thus, one can denote $h\mathbb{Z}^2=\Delta x\mathbb{Z}\times\Delta y\mathbb{Z}$ and $h(\mathbb{Z}+1/2)^2=\Delta x(\mathbb{Z}+1/2)\times\Delta y(\mathbb{Z}+1/2)$. Given $\varepsilon\in Y_h$ and $\rho^h\in \mathring{V}_h$, define
\begin{equation}
    S_{\rho,h}=\{\mathbb{E}\in Y_{h}:\nabla_h\cdot \left(\varepsilon \mathbb{E}\right)=\rho^{h}\ \text{on}\ h\mathbb{Z}^2\}.
\end{equation}
Thus, the discrete electrostatic energy \eqref{energyEdis} can be treated as a functional $\mathcal{F}^{h}:S_{\rho,h}\rightarrow\mathbb{R}$. Lemma~\ref{lem2} and~\ref{lem4} were obtained in \cite{li2024finite}, which provide some equivalent conditions on the minimizer of  $\mathcal{F}^{h}$.

\begin{lem}\label{lem2}
    Let $\varepsilon\in Y_{h}$ satisfy~\eqref{epsilon} and $\rho^{h}\in \mathring{V}_{h}$. There exists a unique minimizer $\mathbb{E}_{min}^{h}$ of $\mathcal{F}^{h}:S_{\rho,h}\rightarrow\mathbb{R}$. For $\mathbb{E}\in S_{\rho,h}$, $\mathbb{E}=\mathbb{E}_{min}^{h}$ if and only if:
    \begin{enumerate}
        \item Local equilibrium: $\mathbb{E}$ is curl free, i.e, $\nabla_{h}\times \mathbb{E}=0$ on $h(\mathbb{Z}+1/2)^{2}$;
        \item Zero total field : $\mathscr{A}_{h}(\mathbb{E})=0$ in $\mathbb{R}^{2}$.
    \end{enumerate}
\end{lem}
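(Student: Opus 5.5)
We treat the problem as a strictly convex quadratic minimization over the affine set $S_{\rho,h}$, read off its Lagrange conditions, and then translate them into the two stated conditions using a discrete Helmholtz decomposition of the periodic grid.

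\textbf{Existence and uniqueness.} Since $\rho^h\in\mathring{V}_h$, the construction \eqref{getE1}--\eqref{getE3} produces an element of $S_{\rho,h}$, so $S_{\rho,h}$ is a nonempty affine subspace of the finite-dimensional space $Y_h$. By \eqref{epsilon}, $\mathcal{F}^h$ is a positive definite quadratic form, bounded below by $\frac{\varepsilon_{\min}}{2}\|\mathbb{E}\|_h^2$. Hence $\mathcal{F}^h$ is coercive and strictly convex (the discrete analogue of Proposition~\ref{p1}), and it has a unique minimizer on the closed affine set $S_{\rho,h}$. This minimizer is characterized by the first-order condition
\[
\sum_{\text{edges}} \varepsilon\,\mathbb{E}\cdot \mathbb{G}=0 \quad \text{for all } \mathbb{G}\in Y_h \text{ with } \nabla_h\cdot \mathbb{G}=0 ,
\]
where we write $\mathbb{G}=\varepsilon\,\delta\mathbb{E}$ for the variation of the flux $\varepsilon\mathbb{E}$. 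In other words, $\mathbb{E}$ must be $\ell^2$-orthogonal to the discrete divergence-free fields.

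\textbf{Describing the divergence-free fields.} On the periodic grid we claim
\[
\ker(\nabla_h\cdot)=\operatorname{span}\{\text{plaquette fields}\}\oplus\{\text{constant fields}\}.
\]
A plaquette field is the elementary circulation around a single cell, i.e.\ the pattern in \eqref{change0} without the $1/\varepsilon$ factors. Constant fields are spanned by $(c_1,0)$ and $(0,c_2)$. The plaquette fields are precisely the discrete curls of node-centred scalar functions on the dual grid. We prove the decomposition by a dimension count. The space $Y_h$ has dimension $2N^2$. The range of $\nabla_h\cdot$ is all of $\mathring{V}_h$: its adjoint is the discrete gradient, whose kernel consists of the constants only. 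Hence $\dim\ker(\nabla_h\cdot)=2N^2-(N^2-1)=N^2+1$. The plaquette span has dimension $N^2-1$, because the only relation is that the sum over all cells vanishes. Finally, constant fields are not in the plaquette span, since each plaquette field has zero average. The dimensions therefore add up to $N^2+1$ and the decomposition holds.

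\textbf{Translating orthogonality into the two conditions.} Testing $\mathbb{E}$ against a single plaquette field yields exactly the circulation of $\mathbb{E}$ around that cell, i.e.\ $h^2(\nabla_h\times\mathbb{E})_{i+\frac12,j+\frac12}$. Testing against the constant fields yields $N^2\mathscr{A}_h(\mathbb{E})$ componentwise. Orthogonality to all of $\ker(\nabla_h\cdot)$ is therefore equivalent to $\nabla_h\times\mathbb{E}=0$ on every cell together with $\mathscr{A}_h(\mathbb{E})=0$. Because the problem is convex, the first-order condition is both necessary and sufficient for minimality, and this gives the stated equivalence.

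The main obstacle is the exact description of $\ker(\nabla_h\cdot)$, in particular the surjectivity and dimension count on the torus. The plan is to handle it through the adjoint and gradient argument above, using the fact that the periodic discrete gradient annihilates only constants.
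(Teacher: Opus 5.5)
Your argument is correct, but it takes a different route from the one the paper points to. The paper does not prove this lemma; it imports it from \cite{li2024finite}. The framework it does set up, namely the KKT discussion in Section~\ref{remodeling} and Theorem~\ref{th1}, runs through the potential. Stationarity says the minimizer is $-\nabla_h\phi$, where the Lagrange multiplier $\phi$ solves the finite-difference Poisson equation. The two conditions then follow from a discrete Poincar\'e lemma on the torus: a periodic grid field is the discrete gradient of a \emph{periodic} grid function if and only if it is curl free with zero average. Curl-freeness makes path sums of $\mathbb{E}$ path independent, and the zero average removes the two winding modes that would otherwise prevent $\phi$ from being periodic.

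You work on the dual side. Instead of describing $\operatorname{range}(\nabla_h)$, you describe its orthogonal complement $\ker(\nabla_h\cdot)$ as plaquette circulations plus the two constant fields, using a rank count. You then read off the conditions by testing against these generators. The two routes are equivalent because $\ker(\nabla_h\cdot)^{\perp}=\operatorname{range}(\nabla_h)$.

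Your version needs only linear algebra and never constructs a potential. Its generators of admissible directions are exactly the algorithm's moves, plaquette/block updates and line shifts. This shows why relaxation enforces only the curl condition, while the line shift is indispensable for zero total field, which is the mechanism behind Lemmas~\ref{lem3}, \ref{lem4} and Theorem~\ref{th2}. The potential route instead yields directly the identification $\mathbb{E}^h_{min}=-\nabla_h\phi^h_{min}$ with the standard finite-difference solution. That is the discrete counterpart of Theorem~\ref{th1}, and it is what ties the minimizer to the observed accuracy.

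Two small points to fix.

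\begin{itemize}
\item Your displayed first-order condition has an extra factor of $\varepsilon$. With $\mathbb{G}=\varepsilon\,\delta\mathbb{E}$, the first variation of $\mathcal{F}^h$ is $\Delta\Omega\sum\mathbb{E}\cdot\mathbb{G}$, not $\Delta\Omega\sum\varepsilon\,\mathbb{E}\cdot\mathbb{G}$. As displayed, testing against a plaquette would give the circulation of $\varepsilon\mathbb{E}$ rather than of $\mathbb{E}$. Your prose (``$\ell^2$-orthogonal'') and your subsequent testing use the correct unweighted form, so this is only a typo.
\item The claim that the only relation among plaquette fields is their total sum needs its one-line justification. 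In $\sum_c\psi_c P_c$, the coefficient on each edge is a multiple of the difference of $\psi$ on the two cells sharing that edge. So the combination vanishes exactly when $\psi$ is constant on the connected dual grid of the torus, which gives the dimension $N^2-1$.
\end{itemize}
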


\begin{lem}\label{lem4}
     Let $\varepsilon\in Y_{h}$ satisfy~\eqref{epsilon}, $\rho^{h}\in \mathring{V}_{h}$ and $\mathbb{E}\in S_{\rho,h}$. Given $i,j\in\{0,1,\ldots,N-1\}$, $\eta_{ij}$ is given by~\eqref{eta}. Then $\mathbb{E}$ is curl free, i.e., $\nabla_{h}\times \mathbb{E}=0$ on $h(\mathbb{Z}+1/2)^{2}$, if and only if $\eta_{ij}=0$ for all $i$ and $j$.
\end{lem}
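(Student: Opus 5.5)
The plan is to show directly from formula \eqref{eta} that each $\eta_{ij}$ is a strictly positive multiple of the discrete curl at the cell center $((i+\frac12)h,(j+\frac12)h)$. The equivalence then holds pointwise, cell by cell. I would do the computation with general $\Delta x,\Delta y$, since that costs nothing, and then specialize to $\Delta x=\Delta y=h$.

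\textbf{Step 1 (numerator).} Factor $\Delta x\Delta y$ out of the numerator of \eqref{eta}. What remains is
\[
\Delta x\bigl(E_{i+\frac12,j}-E_{i+\frac12,j+1}\bigr)+\Delta y\bigl(E_{i+1,j+\frac12}-E_{i,j+\frac12}\bigr).
\]
Comparing this with the definition of $\nabla_h\times\mathbb{E}$ shows it equals $\Delta x\Delta y\,(\nabla_h\times\mathbb{E})_{i+\frac12,j+\frac12}$. Hence the numerator equals $-(\Delta x\Delta y)^2(\nabla_h\times\mathbb{E})_{i+\frac12,j+\frac12}$. This identity is just the discrete Stokes theorem: the circulation of $\mathbb{E}$ around the boundary of the cell equals the cell area times the curl.

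\textbf{Step 2 (denominator).} By \eqref{epsilon}, every nodal value satisfies $\varepsilon_{ij}>\varepsilon_{\min}>0$. The edge values $\varepsilon_{i+\frac12,j}$ and $\varepsilon_{i,j+\frac12}$ are averages of two nodal values, so they are also bounded below by $\varepsilon_{\min}$. (If $\varepsilon$ is viewed directly as an element of $Y_h$ satisfying \eqref{epsilon}, this bound holds immediately.) The denominator $D_{ij}$ is a sum of reciprocals of these edge values, weighted by $(\Delta x)^2$ and $(\Delta y)^2$. Therefore it satisfies
\[
0<D_{ij}\le \frac{2\bigl((\Delta x)^2+(\Delta y)^2\bigr)}{\varepsilon_{\min}},
\]
so $D_{ij}$ is finite and strictly positive.

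\textbf{Step 3 (conclusion).} Combining the two steps gives
\[
\eta_{ij}=-\frac{(\Delta x\Delta y)^2}{D_{ij}}\,(\nabla_h\times\mathbb{E})_{i+\frac12,j+\frac12},
\]
and the coefficient multiplying the curl is nonzero. Hence $\eta_{ij}=0$ if and only if the curl vanishes at that cell center. By periodicity, the cells $i,j\in\{0,\dots,N-1\}$ cover every point of $h(\mathbb{Z}+1/2)^2$ modulo $\overline\Omega$. So $\eta_{ij}=0$ for all $i,j$ is equivalent to $\nabla_h\times\mathbb{E}=0$ on all of $h(\mathbb{Z}+1/2)^2$.

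The membership $\mathbb{E}\in S_{\rho,h}$ plays no role in the algebra; it only places the lemma in the setting of the relaxation, where $\eta_{ij}=0$ means that no local update can decrease $\mathcal F^h$. I do not expect a real obstacle. The only points needing care are the sign and index bookkeeping in Step 1 and the positivity of the averaged permittivity in Step 2.
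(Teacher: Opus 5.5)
Your argument is correct: checking against \eqref{eta} and the definition of $\nabla_h\times\mathbb{E}$ gives $\eta_{ij}=-(\Delta x\Delta y)^2D_{ij}^{-1}(\nabla_h\times\mathbb{E})_{i+\frac12,j+\frac12}$ with $D_{ij}>0$, and periodicity lets the cells $0\le i,j\le N-1$ exhaust $h(\mathbb{Z}+1/2)^2$. The coefficient is a strictly \emph{negative} multiple, not a positive one as your opening sentence says, but only its nonvanishing matters. The paper does not prove this lemma itself; it imports it from \cite{li2024finite}, and your cell-by-cell proportionality computation is the standard direct verification of it.
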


Lemma~\ref{lem3} gives some properties of local relaxation methods, which will be used to prove the convergence of the algorithms proposed in Section~\ref{algos}.
\begin{lem}\label{lem3}
     Let $\varepsilon\in Y_{h}$ satisfy~\eqref{epsilon}, $\rho^{h}\in \mathring{V}_{h}$ and $\mathbb{E}\in S_{\rho,h}$.
    \begin{enumerate}
        \item Given $i,j\in\{0,1,\ldots,N-1\}$. Let $\hat{\mathbb{E}}$ be updated from $\mathbb{E}$ by~\eqref{change0} with $\eta_{i,j}$ given by~\eqref{eta}. Then $\hat{\mathbb{E}}\in S_{\rho,h}$ and $\mathscr{A}_{h}(\varepsilon\hat{\mathbb{E}})=\mathscr{A}_{h}(\varepsilon \mathbb{E})$.
        \item Given $i_{\min}, j_{\min}, i_{\max}, j_{\max} \in\{0,1,\ldots,N-1\}$ with $i_{\min}<i_{\max}$ and $j_{\min}<j_{\max}$. Let $\hat{\mathbb{E}}$ be updated from $\mathbb{E}$ by~\eqref{changecu} with $\eta$ given by~\eqref{etacu}. Then $\hat{\mathbb{E}}\in S_{\rho,h}$ and $\mathscr{A}_{h}(\varepsilon\hat{\mathbb{E}})=\mathscr{A}_{h}(\varepsilon \mathbb{E})$.
        \item Let $\hat{\mathbb{E}}$ be updated from $\mathbb{E}$ by~\eqref{updateetax} with $\eta_{x}$ given by~\eqref{eta_x} for all $j\in\{0,1,\ldots,N-1\}$ and ~\eqref{updateetay} with $\eta_{y}$ given by~\eqref{eta_y} for all $i\in\{0,1,\ldots,N-1\}$. Then $\hat{\mathbb{E}}\in S_{\rho,h}$ and $\mathscr{A}_{h}(\hat{\mathbb{E}})=0$.
    \end{enumerate}
\end{lem}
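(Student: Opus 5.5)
The three parts are all direct computations on the update formulas. Membership in $S_{\rho,h}$ is checked node by node with the discrete divergence of $\varepsilon\hat{\mathbb{E}}$. The average identities are checked by summing the increments of $\varepsilon\mathbb{E}$ over all edges. In every case the update adds to the flux field $\varepsilon\mathbb{E}$ a prescribed increment $\delta(\varepsilon\mathbb{E})$, and both claims are linear in that increment. So it suffices to show $\nabla_h\cdot\delta(\varepsilon\mathbb{E})=0$ at every node of $h\mathbb{Z}^2$ and to compute $\sum\delta(\varepsilon\mathbb{E})$ componentwise.

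\textbf{Part 1.} By \eqref{change0}, $\varepsilon E$ changes by $-\eta/\Delta x$ on the edge $(i,j+\frac12)$, by $+\eta/\Delta x$ on $(i+1,j+\frac12)$, by $+\eta/\Delta y$ on $(i+\frac12,j)$, and by $-\eta/\Delta y$ on $(i+\frac12,j+1)$. Only the four corner nodes of the cell are affected. At node $(i,j)$, the edge $(i+\frac12,j)$ contributes $+\eta/(\Delta x\Delta y)$ and the edge $(i,j+\frac12)$ contributes $-\eta/(\Delta x\Delta y)$, so the net change is zero. The other three corners cancel in the same way. Hence $\hat{\mathbb{E}}\in S_{\rho,h}$.

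For the average, the $x$-components of $\varepsilon\mathbb{E}$ change by $+\eta/\Delta y$ and $-\eta/\Delta y$, summing to zero. The $y$-components change by $\mp\eta/\Delta x$, also summing to zero. So $\mathscr{A}_h(\varepsilon\hat{\mathbb{E}})=\mathscr{A}_h(\varepsilon\mathbb{E})$. Nothing here depends on the specific value of $\eta$ from \eqref{eta}.

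\textbf{Part 2.} There are two ways to handle the block update \eqref{changecu}.
\begin{itemize}
\item Observe that the update equals the superposition of the cell updates \eqref{change0}, with the same $\eta$, over all cells of the block. Interior edges receive equal and opposite contributions from their two neighbouring cells and cancel, leaving exactly the boundary increments in \eqref{changecu}. Part 1 and linearity then give both claims.
\item Alternatively, verify directly. At a boundary node strictly inside a side of the block, one edge of that side enters the node and one leaves, and both carry the same increment, so the divergence change vanishes. At each corner the argument is the one from Part 1. For the sums, the bottom and top sides give $\sum_i(+\eta/\Delta y-\eta/\Delta y)=0$, and the left and right sides cancel likewise.
\end{itemize}
The only place needing care is the periodic wrap. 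If $i_{\max}$ or $j_{\max}$ coincides with $N$ modulo periodicity, the edges must be identified consistently. I expect this bookkeeping, with the index conventions in \eqref{changecu}, to be the main (though minor) obstacle.

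\textbf{Part 3.} For the line shift \eqref{updateetax} on the $j$th $x$-line, every edge $(i+\frac12,j)$ gains $\eta_x$ in $\varepsilon E$. At each node $(i,j)$ the divergence change is $(\eta_x-\eta_x)/\Delta x=0$, using periodicity to close the line, so Gauss's law is preserved. The $y$-lines behave the same way under \eqref{updateetay}.

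For the average, \eqref{eta_x} makes the new line sum vanish:
\[
\sum_i\hat E_{i+\frac12,j}=\sum_i E_{i+\frac12,j}+\eta_x\sum_i\frac{1}{\varepsilon_{i+\frac12,j}}=0 .
\]
Applying this for every $j$ gives the $x$-component of $\mathscr{A}_h(\hat{\mathbb{E}})=0$. The $y$-line updates touch only $y$-components, so they do not disturb the $x$ sums, and symmetrically \eqref{eta_y} zeroes each $y$-line sum. Therefore $\mathscr{A}_h(\hat{\mathbb{E}})=0$.
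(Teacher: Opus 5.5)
Your proposal is correct and follows the paper's route. The paper also dismisses parts 1 and 2 as direct substitution, and it proves part 3 by exactly your line-sum computation $\sum_i\hat E_{i+\frac12,j}=\sum_i E_{i+\frac12,j}+\eta_x\sum_i\varepsilon_{i+\frac12,j}^{-1}=0$; you additionally check Gauss's law for the line shift and give the cleaner superposition-of-cell-updates view of part 2. The periodic-wrap concern you raise does not arise under the stated hypothesis $i_{\max},j_{\max}\le N-1$, and your superposition argument already covers the algorithm's case $i_{\max}=N$.
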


\begin{proof}
    The first two items can be verified by direct substitution. The proof of the third item is given below. Since the updates~\eqref{updateetax}  for different $j$ are independent, given $j\in\{0,1,\ldots,N-1\}$ fixed,  
    \begin{align*}
        \sum_{i=0}^{N-1}\hat{E}_{i+\frac{1}{2},j}&=\sum_{i=0}^{N-1}E_{i+\frac{1}{2},j}+\frac{\eta_x}{\varepsilon_{i+\frac{1}{2},j}}\\
        &=\sum_{i=0}^{N-1}E_{i+\frac{1}{2},j}+\eta_x\sum_{i=0}^{N-1}\frac{1}{\varepsilon_{i+\frac{1}{2},j}}\\
        &=\sum_{i=0}^{N-1}E_{i+\frac{1}{2},j}+\left(-\sum_{i=0}^{N_x-1} E_{i+\frac{1}{2}, j}\bigg{/}\sum_{i=0}^{N_x-1} \frac{1}{\varepsilon_{i+\frac{1}{2}, j}}\right)\sum_{i=0}^{N-1}\frac{1}{\varepsilon_{i+\frac{1}{2},j}}\\
        &=0.
    \end{align*}
Therefore,
\begin{equation*}
    \sum_{j=0}^{N-1}{\sum_{i=0}^{N-1}\hat{E}_{i+\frac{1}{2},j}}=0.
\end{equation*}
Similarly, based on the update~\eqref{updateetay}, it follows that
\begin{equation*}
    \sum_{i=0}^{N-1}{\sum_{j=0}^{N-1}\hat{E}_{i,j+\frac{1}{2}}}=0.
\end{equation*}
Thus, we get $\mathscr{A}_{h}(\hat{\mathbb{E}})=0$ directly.
\end{proof}

By Lemma~\ref{lem2}, $\mathscr{A}_{h}(\mathbb{E}_{min}^{h})=0$ is necessary for $\mathbb{E}_{min}^{h}$ to be a minimizer of the functional $\mathcal{F}^{h}$. Moreover, Lemma~\ref{lem3} shows that the line shift acts as the mechanism that enforces the total zero field constraint on the discrete electric field. Without this step, the iterative scheme cannot ensure that the limiting electric field satisfies the total zero field condition. Therefore, the line shift is essential.
We now present the convergence analysis of the algorithm in Theorem~\ref{th2}.

\begin{thm}\label{th2}
    Let $\varepsilon\in Y_{h}$ satisfy~\eqref{epsilon}, $\rho^{h}\in \mathring{V}_{h}$  and $\mathbb{E}_{min}^{h}\in S_{\rho,h}$ be given by the unique minimizer of $F^{h}:S_{\rho,h}\rightarrow\mathbb{R}$. Let $\mathbb{E}^{(0)}\in\ S_{\rho,h}$ and $\mathbb{E}^{(t)}$ be the sequence generated by Algorithm~\ref{alg:multigrid}.
    \begin{enumerate}
        \item If the sequence is finite ending at $\mathbb{E}^{(m)}$, then $\mathbb{E}^{(m)}=\mathbb{E}_{min}^{h}$.
        \item If the sequence is infinte, then $\mathbb{E}^{(t)}\rightarrow \mathbb{E}^{h}_{min}$ on $h(\mathbb{Z}+1/2)^{2}$.
    \end{enumerate}
\end{thm}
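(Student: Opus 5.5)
The plan is the standard monotone-descent plus compactness argument, following \cite{li2024finite}. Each sweep of Step 2 of Algorithm~\ref{alg:multigrid} is a composition of finitely many elementary maps: block updates \eqref{changecu} with the optimal flux \eqref{etacu}, in particular the finest-level blocks which coincide with the cell updates \eqref{change0}, and the line shifts \eqref{updateetax}--\eqref{updateetay}. By Lemma~\ref{lem3} every one of these maps sends $S_{\rho,h}$ into itself. Each is an exact minimization of $\mathcal{F}^h$ along a one-dimensional affine direction, so it does not increase $\mathcal{F}^h$.

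\textbf{Quantitative decrease.} For a block update, \eqref{deltacu} is a quadratic $\tfrac{a}{2}\eta^2+b\eta$ with $a$ bounded above and below by constants depending only on $h,N,\varepsilon_{\min},\varepsilon_{\max}$. The optimal value therefore gives
\begin{equation*}
\mathcal{F}^h(\hat{\mathbb{E}})-\mathcal{F}^h(\mathbb{E})=-\tfrac{a}{2}\eta^2\le -c\,\eta^2 .
\end{equation*}
The change in the field is $O(|\eta|)$, so $\|\hat{\mathbb{E}}-\mathbb{E}\|_h^2\le C\,(\mathcal{F}^h(\mathbb{E})-\mathcal{F}^h(\hat{\mathbb{E}}))$. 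The line shifts satisfy the same estimate.

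\textbf{Finite case.} I read the stopping test $\delta\mathcal{F}^h=0$ in the exact-arithmetic idealization, i.e.\ $\delta\mathcal{F}^h<\varepsilon_{tol}$ with $\varepsilon_{tol}=0$. Then every elementary step in the last sweep produced zero decrease, so every flux vanished and $\mathbb{E}^{(m)}$ is a fixed point. In particular all finest-level fluxes $\eta_{ij}$ vanish, so $\mathbb{E}^{(m)}$ is curl free by Lemma~\ref{lem4}. The line shift also acted trivially, so item 3 of Lemma~\ref{lem3} gives $\mathscr{A}_h(\mathbb{E}^{(m)})=0$. Lemma~\ref{lem2} then yields $\mathbb{E}^{(m)}=\mathbb{E}^h_{min}$.

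\textbf{Infinite case.} The sequence $\mathcal{F}^h(\mathbb{E}^{(t)})$ is nonincreasing and bounded below by $0$, so it converges. Since $\varepsilon\ge\varepsilon_{\min}$, the iterates are bounded in $\|\cdot\|_h$, and by finite dimensionality some subsequence $\mathbb{E}^{(t_k)}$ converges to some $\mathbb{E}^*$. The set $S_{\rho,h}$ is closed, so $\mathbb{E}^*\in S_{\rho,h}$.

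\textbf{Main obstacle.} The hard step is showing that $\mathbb{E}^*$ is a fixed point of the whole sweep, not merely of its first substep. I would handle this as follows.
\begin{itemize}
\item Every elementary map is continuous, since the fluxes \eqref{etacu}, \eqref{eta_x}, \eqref{eta_y} are linear in $\mathbb{E}$ with denominators bounded away from zero.
\item The energy drop within sweep $t$ tends to $0$, and it dominates $c\sum\eta^2$ over all substeps of that sweep. Hence every flux evaluated along the sweep tends to zero.
\item By the displacement bound, all intermediate fields of the sweep converge to the same limit $\mathbb{E}^*$.
\item By continuity, each flux evaluated at $\mathbb{E}^*$ is zero. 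In particular all finest-cell fluxes $\eta_{ij}(\mathbb{E}^*)=0$, and the line-shift fluxes are zero.
\end{itemize}
Lemma~\ref{lem4} then gives $\nabla_h\times\mathbb{E}^*=0$. Vanishing line-shift fluxes give zero line sums, hence $\mathscr{A}_h(\mathbb{E}^*)=0$. Lemma~\ref{lem2} gives $\mathbb{E}^*=\mathbb{E}^h_{min}$.

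Since every convergent subsequence has this same limit and the sequence is bounded, the whole sequence converges to $\mathbb{E}^h_{min}$. This holds for the forward and zigzag orderings alike, because both orderings include all finest-level blocks and the line shifts in each sweep.
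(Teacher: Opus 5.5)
Your proposal is correct and follows essentially the paper's argument. The finite case goes through Lemmas~\ref{lem4}, \ref{lem3} and \ref{lem2}; in the infinite case, monotone energy decrease forces every elementary flux to zero, so every limit point is curl free with zero total field and must equal $\mathbb{E}^h_{min}$. The only difference is cosmetic: the paper shows $\eta^{(t)}_{i,j}\to 0$ along the whole sequence by tracking the changes since the last update of cell $(i,j)$, whereas you pass to the limit through the intermediate fields of one sweep using continuity of the fluxes, which is slightly cleaner bookkeeping.
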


\begin{proof}
    1. If the sequence is finite ending at $\mathbb{E}^{(m)}$, then $\eta_{i,j}=0$ for all $i,j\in\{0,1,\ldots,N-1\}$. Thus , $\mathbb{E}^{(m)}$ is curl-free by Lemma~\ref{lem4} and $\mathscr{A}_{h}(\mathbb{E}^{(m)})=0$ by Lemma~\ref{lem3}. Therefore, we can get $\mathbb{E}^{(m)}=\mathbb{E}_{min}^{h}$ from Lemma~\ref{lem2}.

2. Given $i,j\in\{0,1,\ldots,N-1\}$, denote $\eta^{(t)}_{i,j}$ by~\eqref{eta} for $\mathbb{E}=\mathbb{E}^{(t)}$. We claim that 
\begin{equation}\label{thc1}
\lim_{t\rightarrow\infty}\eta_{i,j}^{(t)}=0 \quad \forall i,j\in\{0,1,\ldots N-1\}.
\end{equation}

Suppose that~\eqref{thc1} is true. We prove that $\mathbb{E}^{(t)}\rightarrow \mathbb{E}_{min}^{h}$. It suffices to prove that for any convergent subsequence $\mathbb{E}^{(t_r)}$ of $\mathbb{E}^{(t)}$ with $\mathbb{E}^{(t_r)}\rightarrow \mathbb{E}^{(\infty)}$, then $\mathbb{E}^{(\infty)}=\mathbb{E}_{min}^{h}$.  Since function $\mathscr{A}_{h}:Y_{h}\rightarrow\mathbb{R}$ depends continuously on $\mathbb{E}$, we have $\mathscr{A}_{h}(\mathbb{E}^{(\infty)})=0$. And with the assumption~\eqref{thc1}, $\mathbb{E}^{(\infty)}\in S_{\rho,h}$ and $\eta_{i,j}^{(\infty)}=0$. Hence $\mathbb{E}^{(\infty)}$ is curl-free by Lemma~\ref{lem4}. Thus, $\mathbb{E}^{(\infty)}=\mathbb{E}_{min}^{h}$ by Lemma~\ref{lem2}.

We now prove the claim~\eqref{thc1}. Note that the iteration from $\mathbb{E}^{(t)}$ to $\mathbb{E}^{(t+1)}$ consists of many cycles of hierarchical mesh updates, single mesh updates and line shift updates. For convenience, we redefine the sequence of updates. Still start from $\mathbb{E}^{(0)}$, $\mathbb{E}^{(t+1)}$ is obtain form $\mathbb{E}^{(t)}$ by a hierarchical mesh update~\eqref{changecu}, a single mesh update~\eqref{change0} and a line shift update~\eqref{updateetax} or~\eqref{updateetay}. Since the original sequence is a subsequence of the new one, we only need to prove claim~\eqref{thc1} is true for the new sequence.

Since $\mathcal{F}^{h}[\mathbb{E}^{(t)}]>0$ decreases as $t$ increases, the limit $\mathcal{F}^{h}_{\infty}:=\lim_{t\rightarrow\infty}\mathcal{F}^{h}[\mathbb{E}^{(t)}]$ exists. And the change between two steps has the following property,
\begin{equation}\label{deltat}
\lim_{t\rightarrow\infty}\delta_{t}:=\lim_{t\rightarrow\infty}(\mathcal{F}^{h}[\mathbb{E}^{(t)}]-\mathcal{F}^{h}[\mathbb{E}^{(t+1)}])=0.
\end{equation}
We still denote $\eta^{(t)}_{i,j}$ by~\eqref{eta} for $\mathbb{E}=\mathbb{E}^{(t)}$. Let $\tau<t$ be the time of the last single mesh update at $(i,j)$ and $\eta^{(\tau)}_{i,j}$ denotes the corresponding value at that update, thus we have $\eta_{i,j}^{(\tau+1)}=0$. Notice that $\eta^{(t)}_{i,j}$ is a linear combination of $E^{(t)}_{i+\frac{1}{2},j},E^{(t)}_{i+\frac{1}{2},j+1},E^{(t)}_{i,j+\frac{1}{2}}$ and $E^{(t)}_{i+1,j+\frac{1}{2}}$, whose values is obtained from some previous updates. The update that determines all these values has three cases: the first case is a single  mesh update~\eqref{change0} for some $(m,n)$ in $\mathbb{E}^{(t')}$ with $\tau<t'< t$, the second one is a hierarchical mesh update~\eqref{changecu} for some $(i_{\min},j_{\min},i_{\max},j_{\max})$ in $\mathbb{E}^{(t')}$ with $\tau<t'< t$, the last one is line shift update~\eqref{updateetax}
or~\eqref{updateetay} in $\mathbb{E}^{(t')}$ with $\tau<t'< t$.

Consider the first case. Denote the perturbation is $\eta_{m,n}^{(t')}$, by assumption~\eqref{epsilon}, it follows that
\begin{equation*}
    \frac{4}{\varepsilon_{max}^{2}}[\eta_{m,n}^{(t')}]^2 \leq||\mathbb{E}^{(t'+1)}-\mathbb{E}^{(t')}||_{h}^2\leq \frac{4}{\varepsilon_{min}^{2}}[\eta_{m,n}^{(t')}]^2,
\end{equation*}
and by~\eqref{deltat} and we have the fact that $t'\rightarrow\infty$ as $t\rightarrow\infty$,
\begin{equation}\label{EE}   
\lim_{t\rightarrow\infty}||\mathbb{E}^{(t'+1)}-\mathbb{E}^{(t')}||_{h}^2=\lim_{t\rightarrow\infty}[\eta_{m,n}^{(t')}]^2=0.
\end{equation}
It follows from equation~\eqref{change0} that
\begin{equation}\label{uper}
    |E^{(t'+1)}_{i+\frac{1}{2},j}-E^{(t')}_{i+\frac{1}{2},j}|\leq\frac{\eta_{m,n}^{(t')}}{\varepsilon_{\min}h},
\end{equation}
and the variation of 
$E^{(t')}_{i+\frac{1}{2},j+1},E^{(t')}_{i,j+\frac{1}{2}}$ and $E^{(t')}_{i+1,j+\frac{1}{2}}$ admit the same upper bound as that in~\eqref{uper}. Since the single part update is calculated by~\eqref{eta}, we have
\begin{equation}
    |\eta_{i,j}^{(t'+1)}-\eta_{i,j}^{(t')}|\leq\frac{\varepsilon_{\max}}{\varepsilon_{\min}}\eta_{m,n}^{(t')}.
\end{equation}
Combining with~\eqref{EE}, we obtain $\eta_{i,j}^{(t'+1)}-\eta_{i,j}^{(t')}\rightarrow 0$ as $t\rightarrow \infty$. 

Next consider the second case. Denote the perturbation is $\tilde{\eta}^{(t')}$. Similarly, we have
\begin{equation}\label{EEm}
\frac{2(i_{\max}+j_{\max}-i_{\min}-j_{\min})}{\varepsilon_{\max}^{2}}\leq\frac{||\mathbb{E}^{(t'+1)}-\mathbb{E}^{(t')}||_{h}^2}{[\tilde{\eta}^{(t')}]^2}\leq\frac{2(i_{\max}+j_{\max}-i_{\min}-j_{\min})}{\varepsilon_{\min}^2},
\end{equation}
and thus 
\begin{equation*}  
\lim_{t\rightarrow\infty}||\mathbb{E}^{(t'+1)}-\mathbb{E}^{(t')}||_{h}^2=\lim_{t\rightarrow\infty}[\tilde{\eta}^{(t')}]^2=0.
\end{equation*}
Similar to the proof of Case 1, it holds that $\eta_{i,j}^{(t'+1)}-\eta_{i,j}^{(t')}\rightarrow 0$ as $t\rightarrow \infty$.

Last, consider Case 3. We still have $||\mathbb{E}^{(t'+1)}-\mathbb{E}^{(t')}||_{h}^2\rightarrow 0$ and $\eta_{x}^{t'}\rightarrow 0$ ($\eta_{y}^{t'}\rightarrow 0$) as $t\rightarrow \infty$. Completely analogous to Cases 1 and 2, $\eta_{i,j}^{(t'+1)}-\eta_{i,j}^{(t')}\rightarrow 0$ as $t\rightarrow \infty$ holds.

Since only finitely many updates occur between 
$\tau$ and $t$, thus $\eta_{i,j}^{(t)}\rightarrow\eta_{i,j}^{(\tau+1)} =0$ as $t\rightarrow \infty$. 
At this point, the convergence of the algorithm has been established. 
\end{proof}

\section{Numerical results}\label{NumResults}
In this section, we conduct numerical experiments to verify the accuracy and effectiveness of the proposed algorithm. We focus on comparing the newly proposed HLR methods with the original single mesh method from multiple perspectives, including metrics like accuracy, number of iteration steps and computational time. 

\subsection{Accuracy and comparison for Poisson's equation with exact solution} 
 Consider $\Omega=(0,4)\times(0,4)$ with periodic boundary conditions. The Poisson's equation has the analytical solution for inhomogeneous dielectric  permittivity
 \begin{equation}\label{exactphi}
       \phi(x,y)=\cos\left(\frac{\pi x}{2}\right)\sin\left(\frac{\pi y}{2}\right),\quad \text{and}\quad \varepsilon(x,y)=2+\cos\left(\frac{\pi x}{2}\right)\cos\left(\frac{\pi y}{2}\right),
 \end{equation}
where the source term $\rho$ is analytically computed according to Poisson’s equation $-\nabla\cdot(\varepsilon\nabla\phi)=\rho$  such that the unique solution is given by the first equation of~\eqref{exactphi}. 

In our calculations, the Poisson's equation is solved by local algorithms via three distinct relaxation methods over a series of computational grids. Based on the given charge density distribution, the electric field is first initialized to satisfy Gauss’s law by the same procedure. Upon convergence of the relaxation iteration, numerical $L^\infty$ errors of the electric fields are calculated. As shown in Table~\ref{errta}, both the forward and zigzag HLR methods achieve second-order convergence with respect to the number of grid points $N_x=N_y=N$ in terms of $L^\infty$ error, a convergence behavior consistent with that of the single mesh update method. Notably, the derived convergence characteristics align with the conclusions reported in \cite{li2024finite}.

\begin{table}[H]
\centering
\begin{tabular}{c c c c c c c}

\hline
$N$& 
Single& Order & Forward & Order&
Zigzag& Order\\
\hline
32  & 8.157469E-3 & - & 8.157469E-3 & - & 8.157469E-3 & -\\

  64& 2.051296E-3 & 1.9916 & 2.051296E-3 & 1.9916 &2.051296E-3& 1.9916 \\

128  & 5.135733E-4 & 1.9979 & 5.135728E-4 & 1.9979 & 5.135728E-4 & 1.9979\\

256  & 1.284438E-4 & 1.9994 & 1.284400E-4 & 1.9994 &1.284400E-4&1.9994 \\
\hline
\end{tabular}
\caption{The $L^\infty$ error and convergence order for three relaxation methods.}
\label{errta}
\end{table}

To elucidate the convergence behavior of the curl residual, we analyze its spatial profile along the $x$-direction at the cross-section $y=0.5$. The residual distributions at the initial stage and after 50 and 100 iterations for three relaxation methods with $N=128$ are compared in Fig.~\ref{fequency}. While the single mesh relaxation rapidly suppresses high-frequency residuals, it converges slowly for low-frequency components. In contrast, the HLR methods effectively reduce residuals over a broad range of spatial scales, leading to faster decay of low-frequency error modes. Moreover, the zigzag HLR method consistently shows stronger attenuation of low-frequency modes than the forward HLR method.
        
       

\begin{figure}[ht]
    \centering 
    \captionsetup[sub]{labelformat=empty}
    
    \begin{subfigure}{0.31\textwidth} 
        \centering
        \includegraphics[width=\linewidth]{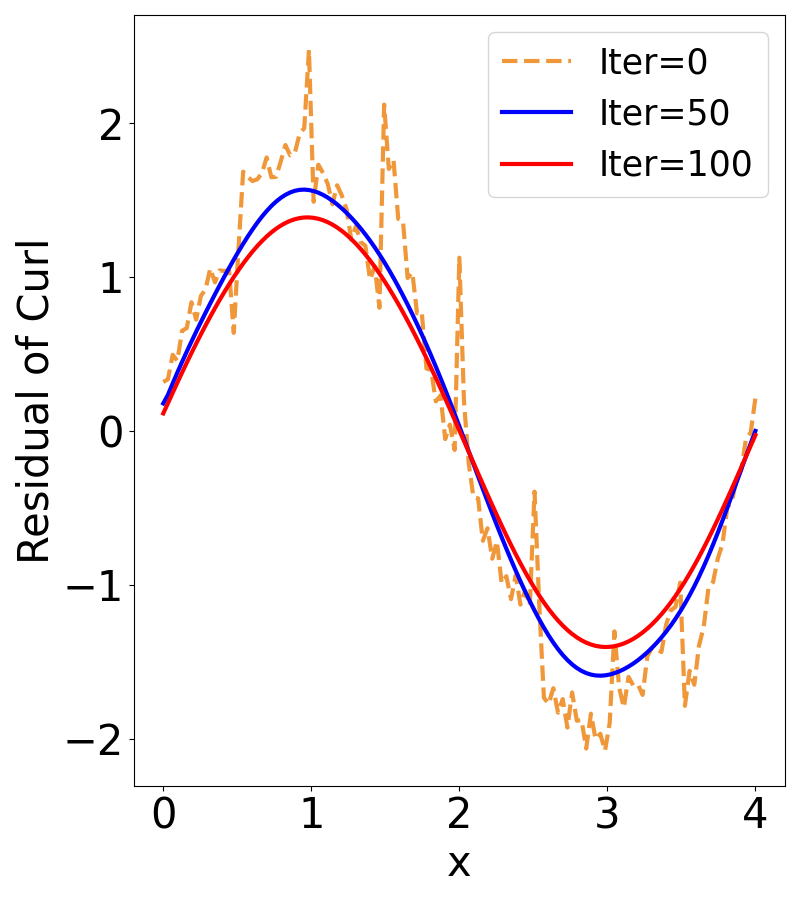} 
        \caption{(a) Single mesh} 
    \end{subfigure}    
    \begin{subfigure}{0.31\textwidth}
        \centering
        \includegraphics[width=\linewidth]{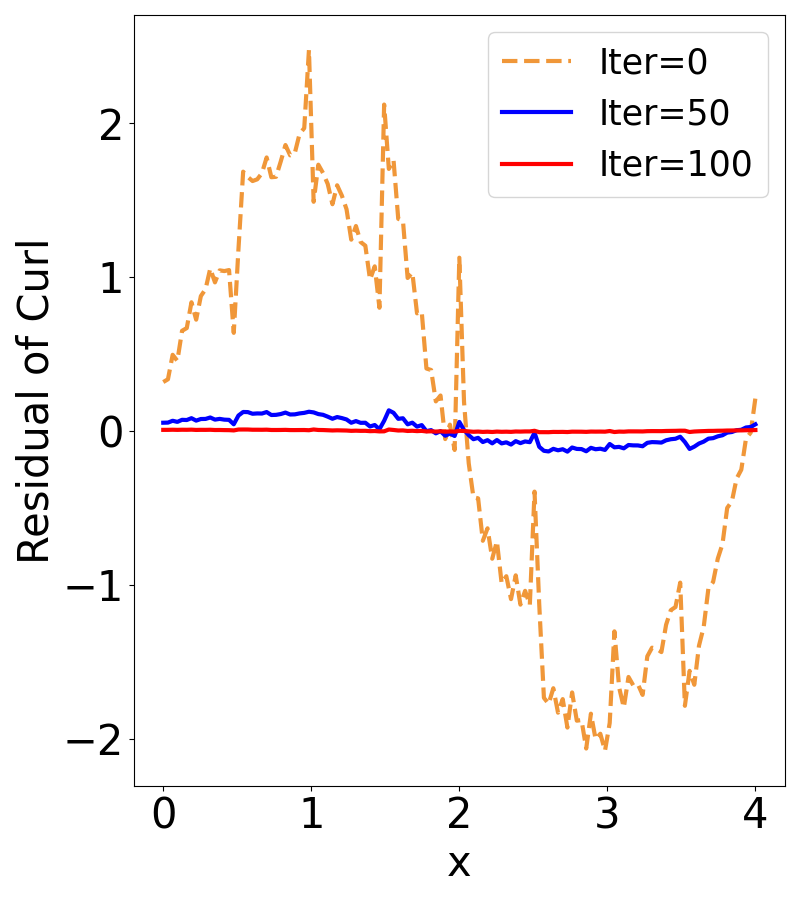}
        \caption{(b) Forward HLR}       
    \end{subfigure}
    \begin{subfigure}{0.31\textwidth}
        \centering
        \includegraphics[width=\linewidth]{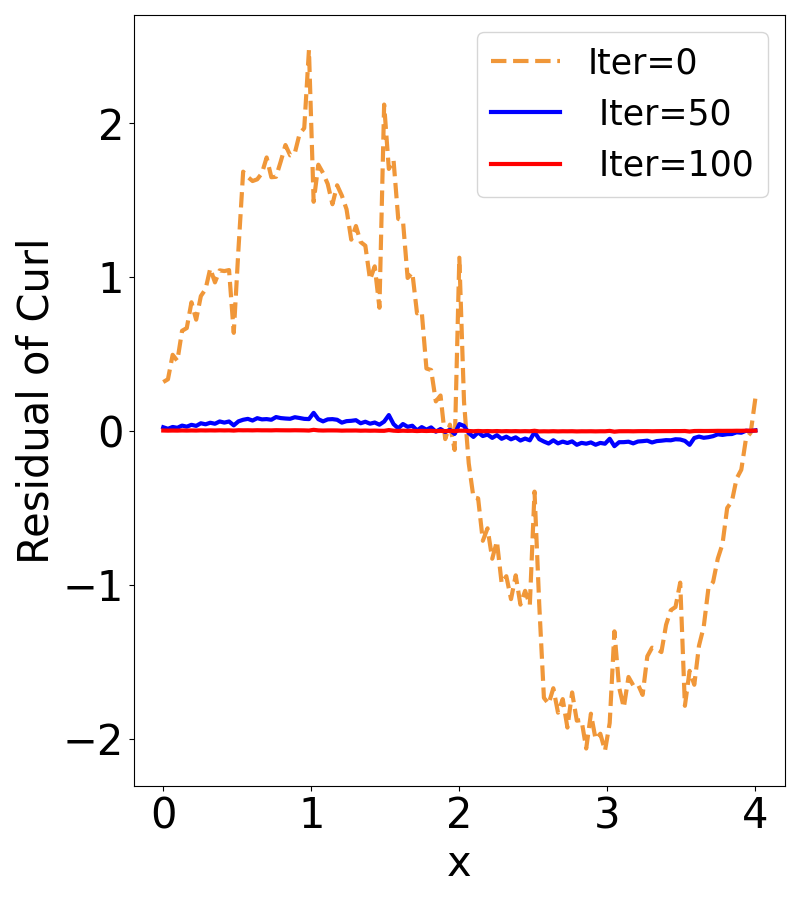}
        \caption{(c) Zigzag HLR}       
    \end{subfigure}    
    
    \caption{Distribution of the curl residual along the $x$-direction at the cross-section $y = 0.5$ for three relaxation methods. Panels (a)–(c) correspond to single mesh relaxation, forward HLR and zigzag HLR methods, respectively.}
\label{fequency}
\end{figure}

\subsection{Time-dependent Poisson's equation tests}
In most electrolyte transport problems, charge concentrations evolve dynamically over time. This temporal variation necessitates the frequent solution of the Poisson's equation at every time step to update the electric potential field. To focus exclusively on the numerical solution of the Poisson's equation itself, we simplify the problem by neglecting the detailed charge transport dynamics and instead abstract the original time-dependent system into a sequence of independent Poisson's equation solves. We then apply the three aforementioned relaxation schemes to carry out these numerical solutions and compare their performance.


Denote $\Omega=(0,4)\times(0,4)$ with periodic boundary conditions. Let the initial charge density distribution and the homogeneous dielectric permittivity  be given by 
\begin{equation*}
  \rho(x,y)=0,\quad \text{and}\quad \varepsilon(x,y)=1.
\end{equation*} 
Denote the charge density  at the $n$ th time step by $\rho^{(n)}$, which is iteratively given by
\begin{equation}\label{deltarho}
    \rho^{(n+1)}=\rho^{(n)}+\frac{1}{M^{(n)}}\sum_{k=1}^{16}\left[a_{k}^{(n)}\cos\left(\frac{k\pi x}{2}\right)\sin\left(\frac{k\pi y}{2}\right)+b_{k}^{(n)}\sin\left(\frac{k\pi x}{2}\right)\cos\left(\frac{k\pi y}{2}\right)\right].
\end{equation}
for $M^{(n)}=64\sum_{k=1}^{16}(a_{k}^{(n)}+b_{k}^{(n)})$, and $a_{k}^{(n)},\ b_{k}^{(n)}$ being random variables uniformly distributed on the interval $(0,1)$.


Fig.~\ref{rhotime} shows the average computational time per time step as a function of the grid points $N$ for three relaxation methods and an FFT-based solver, averaged over 100 time steps. At each time step, the iterations are terminated when the tolerance $\varepsilon_{tol}$ reaches $10^{-7}$ (left panel) and $10^{-9}$ (right panel). The results indicate that the HLR methods lead to a clear reduction in computational cost compared to the single mesh method. For $\varepsilon_{tol}=10^{-7}$
, HLR methods are still competitive with the FFT-based solver, requiring roughly eight times the computational time over the range of spatial step sizes considered. Owing to the weak dependence of the iteration count on grid refinement, together with the hierarchical structure of the algorithm, the overall computational complexity is approximately $\mathcal{O}(N\text{log}N)$. 

In the case of spatially varying dielectric permittivity, we set the initial charge density and permittivity are
\begin{equation*}
  \rho(x,y)=0,\quad \text{and}\quad \varepsilon(x,y)=2+\cos(\frac{\pi x}{2})\cos(\frac{\pi y}{2}).
\end{equation*} 
The charge density evolves as time according to the same rule as given in~\eqref{deltarho}. Fig.~\ref{rhotimevary} presents the average computational time per time step as a function of the grid points $N$ for the three relaxation methods in the case of inhomogeneous dielectric permittivity. As FFT-based methods are not applicable to inhomogeneous media, the computational times for the homogeneous permittivity case are plotted as dashed lines for comparison. It can be seen that HLR methods continue to provide a substantial reduction in computational cost and maintain FFT-like computational complexity. Moreover, compared with the homogeneous dielectric permittivity case, no additional computational complexity is incurred.

\begin{figure}[H]
  \centering 
    \captionsetup[sub]{labelformat=empty}
    \begin{subfigure}{0.48\textwidth} 
        \centering
        \includegraphics[width=\linewidth]{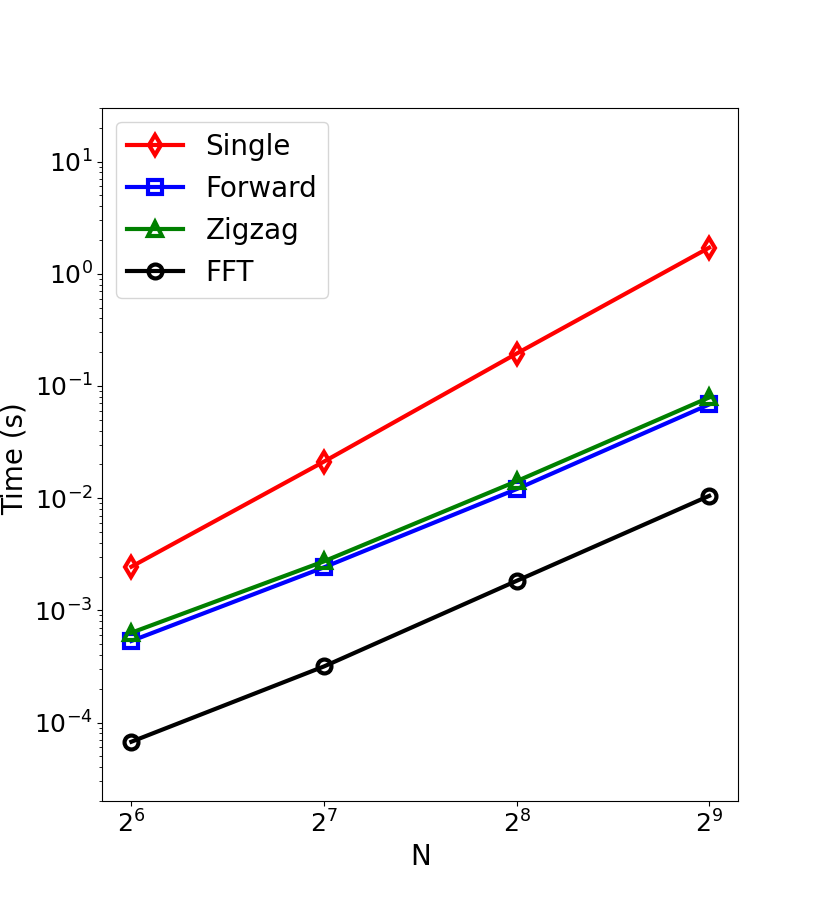} 
    \end{subfigure}
    \begin{subfigure}{0.48\textwidth}
        \centering
        \includegraphics[width=\linewidth]{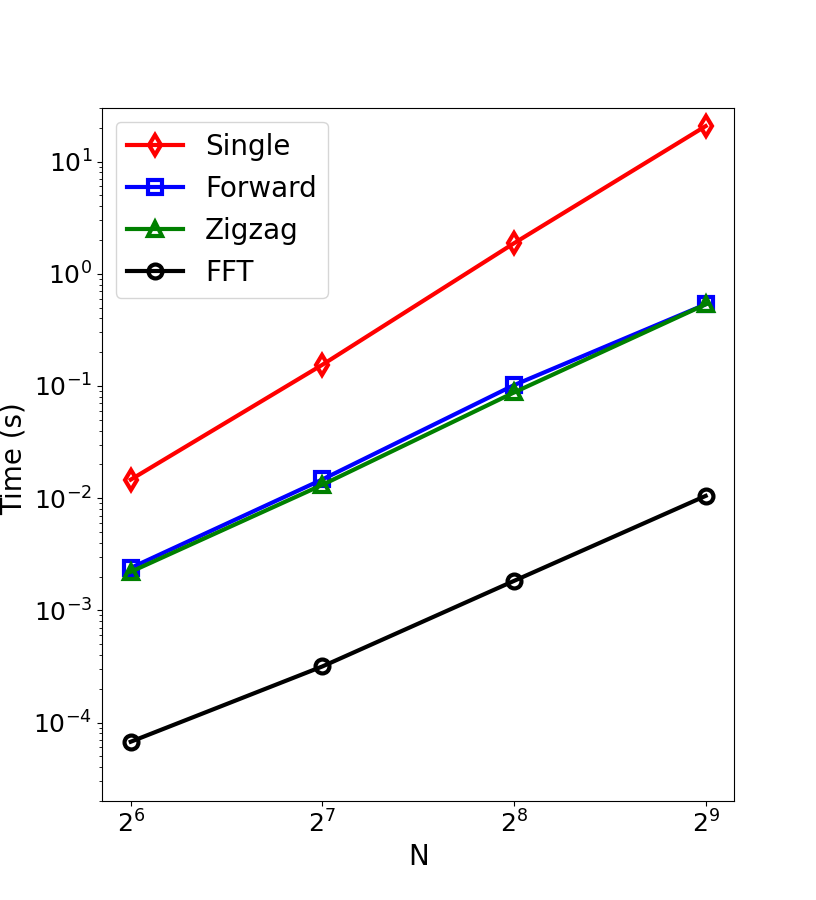}
    \end{subfigure}
  \caption{Average iteration time per time step versus the number of grid points $N$ for three relaxation methods and an FFT-based method, averaged over 100 time steps for homogeneous dielectric permittivity case. The stopping tolerance at each time step is set to:  $\varepsilon_{tol}=10^{-7}$ (left),  $\varepsilon_{tol}=10^{-9}$ (right).}
  \label{rhotime}
\end{figure}

\begin{figure}[H]
  \centering 
    \captionsetup[sub]{labelformat=empty}
    \begin{subfigure}{0.48\textwidth} 
        \centering
        \includegraphics[width=\linewidth]{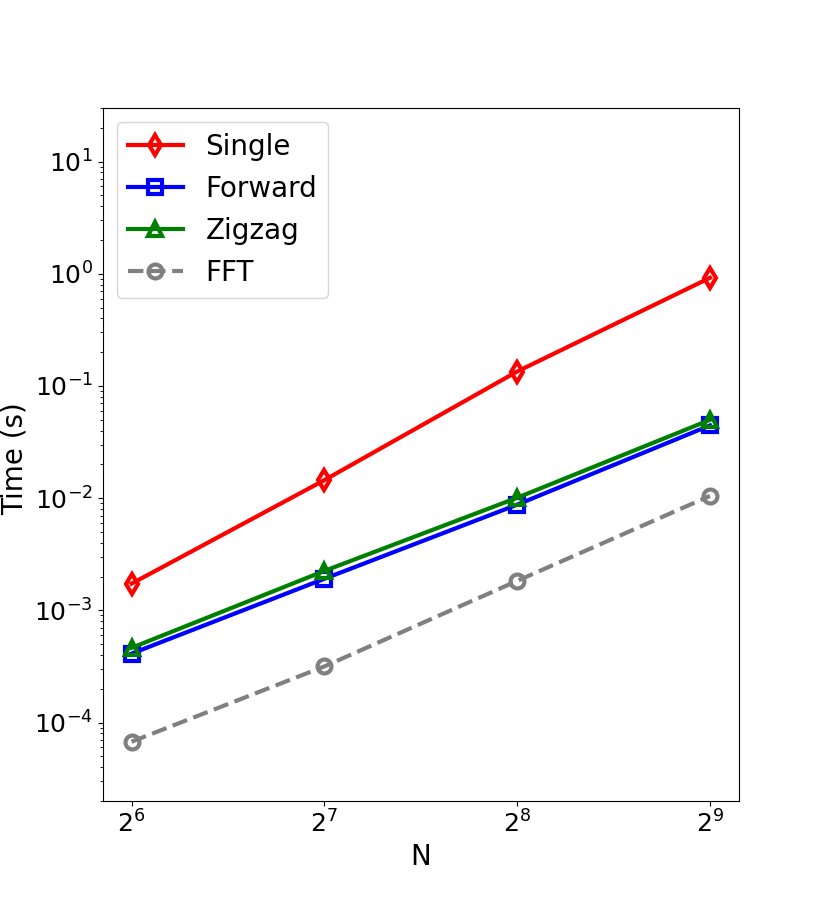} 
    \end{subfigure}
    \begin{subfigure}{0.48\textwidth}
        \centering
        \includegraphics[width=\linewidth]{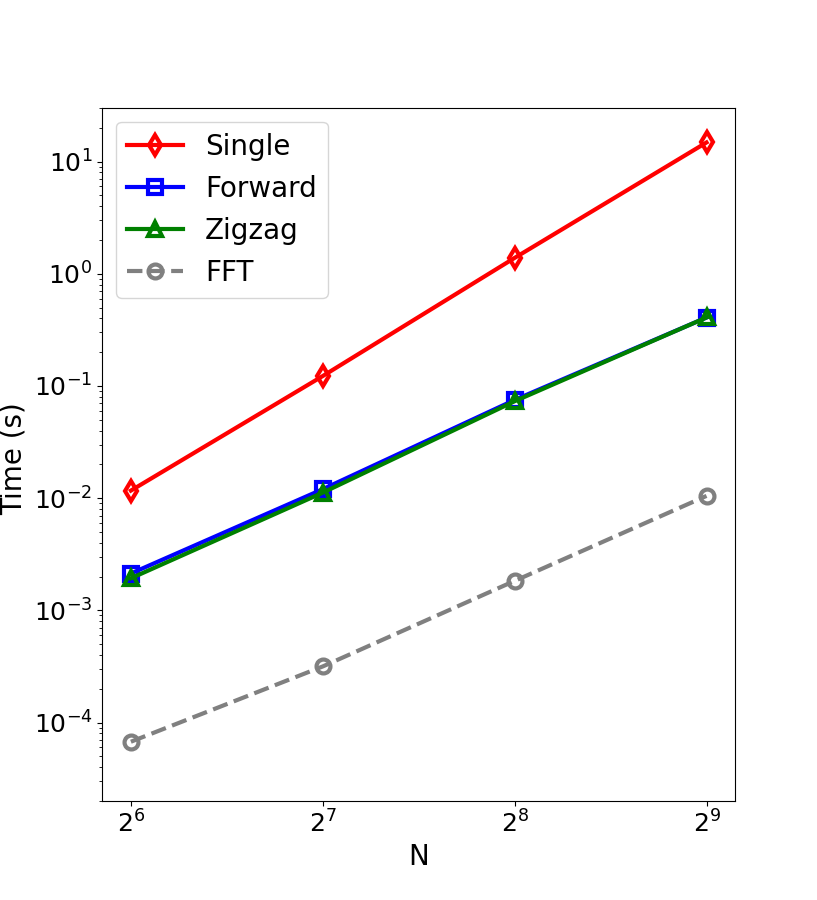}
    \end{subfigure}
  \caption{Average iteration time per time step versus the number of grid points $N$ for three relaxation methods averaged over 100 time steps for inhomogeneous dielectric permittivity case. The stopping tolerance at each time step is set to:  $\varepsilon_{tol}=10^{-7}$ (left),  $\varepsilon_{tol}=10^{-9}$ (right). The dash lines represent the FFT results of Figure \ref{rhotime}.}
  \label{rhotimevary}
\end{figure}

\subsection{Poisson–Boltzmann equation tests}
The Poisson–Boltzmann (PB) equation governs electrostatic interactions at thermodynamic equilibrium~\cite{ Gouy:JP:1910,honig1995classical,tu2022linear,xu2014solving}. In the presence of fixed charge density and spatially varying dielectric coefficient, the dimensionless PB equation can be written as
\begin{equation*}
    -\nabla \cdot  \varepsilon \nabla \phi = \kappa^2\left[\rho^f+\sum_{\ell} z^\ell\frac{N^{\ell}\exp(-z^\ell\phi)}{\int_{\Omega}\exp(-z^\ell\phi)d\bm x}\right].
\end{equation*}
where $\kappa$ is the dimensionless parameter, $\rho^f$ is fixed charge density, $N^{\ell}$ is the total number of the $\ell$-th charged particle and $z^\ell$ is its valence. Notably, solving the PB equation can be reformulated as a constrained minimization problem. The associated energy functional admits minimization via a Maggs-type algorithm~\cite{ BSD:PRE:2009,ZWL:PRE:2011}, yielding a numerical scheme that is not only straightforward and easy to implement, but also unconditionally stable. For a system consisting of 
$M$ types of particles, it admits minimizing the following energy functional subject to the constraints of discrete Gauss’s law and mass conservation:
\begin{equation*}
    \mathcal{F}_{PB} = \int_\Omega \left( \frac{\varepsilon}{2} |\bm E|^2 + \sum_{\ell=1}^M c^\ell \ln c^\ell \right) d \bm x.
\end{equation*}
The Maggs-type algorithm for solving this minimization problem consists of two core iterative steps: field updates applied to all finest cell in the grid, and concentration updates executed for all pairs of adjacent movable grids~\cite{BSD:PRE:2009}. A key feature of this algorithm is that both update steps rigorously preserve the discrete Gauss’s law and mass conservation constraints, which are essential for the physical consistency of Poisson–Boltzmann (PB) equation solutions. The field update step is identical to that used for solving the standard Poisson's equation in this work, the only additional component required for the PB equation is the concentration update step. The HLR methods proposed in this paper are designed to accelerate the convergence of the Maggs-type solver, thereby improving the computational efficiency of PB equation solutions.  

This section presents three-dimensional ($3\text{D}$) numerical examples to validate the proposed methods.
In the numerical example, we denote $\lambda_0$ as the fundamental length scale, corresponding to the Lennard–Jones diameter used in the simulation. A spherical colloidal particle with radius 
$r=3\lambda_0$ is placed at the center of a cubic simulation box $(-\frac{L}{2},\frac{L}{2})\times(-\frac{L}{2},\frac{L}{2})\times(-\frac{L}{2},\frac{L}{2})$ with periodic boundary conditions, where $L=30\lambda_0$. The colloidal particle carries a total charge 
$Ze=60e$, which is uniformly distributed over its surface.
The solvent contains only one ionic species, namely monovalent counterions with valence $z^1=-1$. To maintain overall charge neutrality, counterions with a total charge of $-Ze$ are initially uniformly distributed in the region outside the colloidal particle. The system is discretized on three-dimensional grids with different spatial resolutions, $N_x=N_y=N_z=32,~64$ and $128$. An iterative stopping criterion is imposed such that the energy change associated with the field update is below $\varepsilon_{tol}=10^{-7}$. The electrostatic energy functional is then minimized using the single mesh relaxation, forward HLR, and zigzag HLR methods to obtain the equilibrium counterion distribution.


Fig.~\ref{PB} presents plots of cross-sectional at the plane $z=0$ for the cunterion concentrations at equilibrium obtained using the HLR methods in different spatial resolutions. As expected, due to the positive surface charge on the colloidal sphere, counterion accumulate in its vicinity. Upon reaching equilibrium, the cunterion concentrations obtained from both the forward HLR and zigzag HLR are nearly identical to those obtained from the single mesh update in \cite{BSD:PRE:2009}.

\begin{figure}[ht]
    \centering 
    \captionsetup[sub]{labelformat=empty}
    \begin{subfigure}{0.3\textwidth} 
        \centering
        \includegraphics[width=\linewidth]{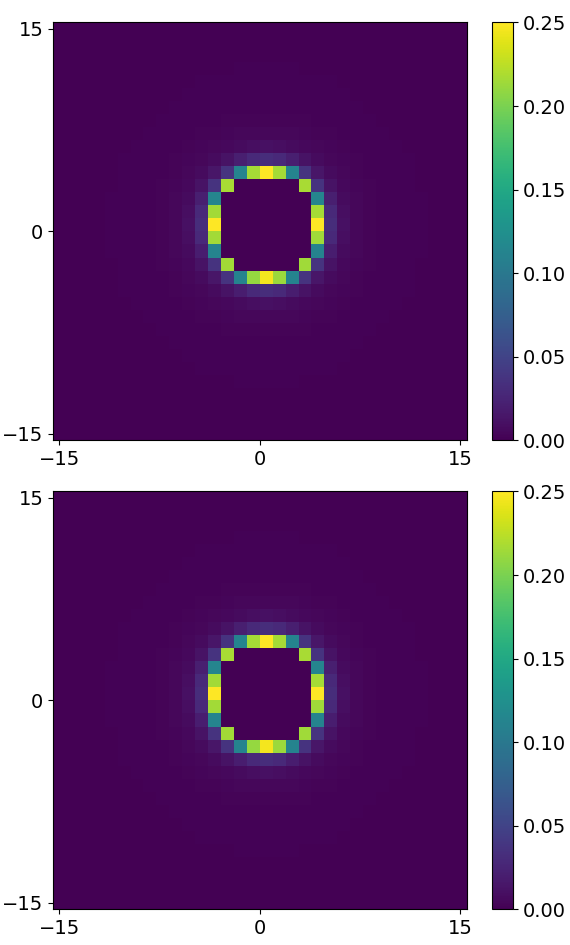} 
        \caption{(a) $32\times 32\times 32$} 
        
    \end{subfigure}
    \begin{subfigure}{0.3\textwidth}
        \centering
        \includegraphics[width=\linewidth]{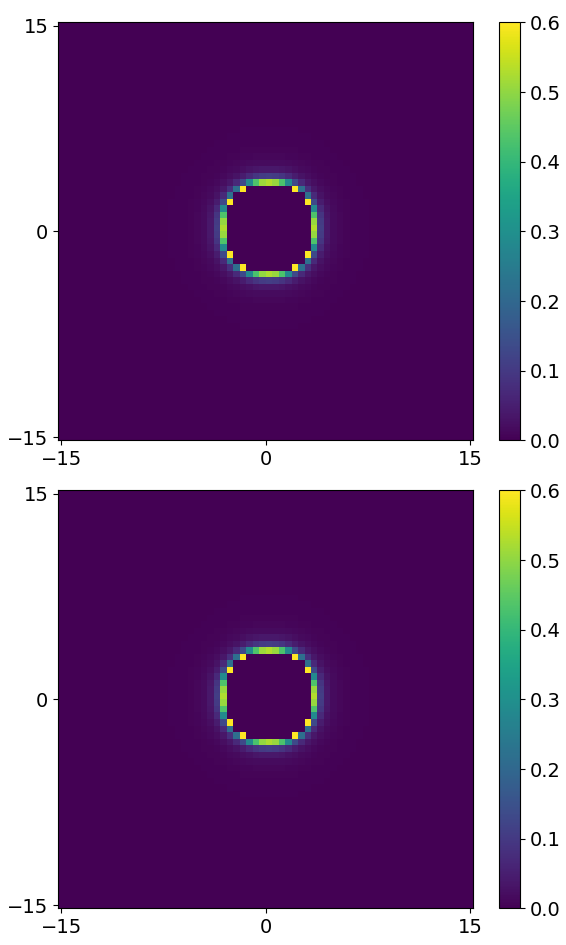}
        \caption{(b) $64\times 64\times 64$}
       
    \end{subfigure}
    \begin{subfigure}{0.3\textwidth}
        \centering
        \includegraphics[width=\linewidth]{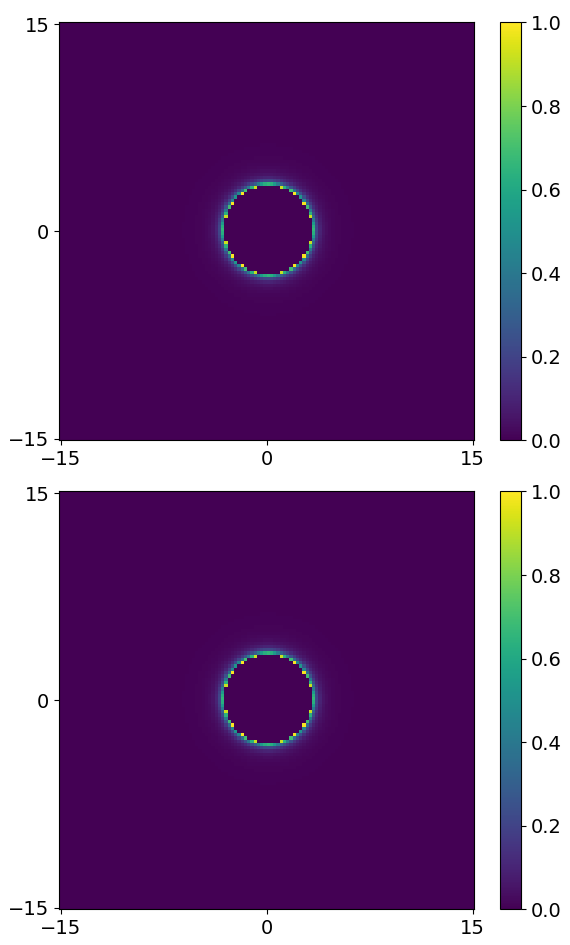}
        \caption{(c) $128\times 128\times 128$}
       
    \end{subfigure}
    \caption{Counterion concentration obtained using HLR methods in different discretized grid. Rows from top to bottom represent forward HLR and zigzag HLR method.}
\label{PB}
    \end{figure}



Table~\ref{pbt} presents the computational costs of the curl-free step and the total runtime (including concentration moves), for all three update methods upon reaching equilibrium. Compared with the single mesh method, both the forward HLR and zigzag HLR methods reduce the total computational time required to reach the same equilibrium distribution, with particularly noticeable improvements in the curl-free step. 
For low-resolution grids, the total number of iteration steps required for the zigzag HLR method to reach equilibrium is not significantly smaller than that for the forward HLR method. Moreover, a single local relaxation in the zigzag HLR method involves more computational operations than in the forward HLR method, resulting in a comparable overall computational time between the two methods. However, as the grid resolution increases, the zigzag HLR method requires noticeably fewer iteration steps to reach equilibrium than the forward HLR method. Consequently, despite the slightly higher cost of the curl-free relaxation in the zigzag scheme, the total computational time for the zigzag HLR method becomes lower than that of the forward HLR method.


\begin{table}[H]
\centering
\begin{tabular}{c c c c c c c c c c}
\hline
\multirow{2}{*}{N} & &\multicolumn{2}{c}{Single mesh} & &\multicolumn{2}{c}{ Forward HLR}& &\multicolumn{2}{c}{Zigzag HLR}\\
\cline{3-4}\cline{6-7}\cline{9-10}
& & Relaxation& Total & &  Relaxation& Total & & Relaxation& Total\\
\hline
32& & 13.3s & 35.1s & & 6.7s & 21.4s & & 7.5s & 21.3s\\
64& & 320.4s & 834.7s & & 171.9s & 530.4s & & 209.4s & 547.7s\\
128& & 6494.7s & 16822.0s & & 3767.3s & 11381.7s & & 4006.1s & 10262.0s\\
\hline

\end{tabular}
\caption{Computational time required by single mesh method and HLR methods for the total simulation and for the curl-free relaxation alone.}
\label{pbt}
\end{table}

\subsection{Poisson-Nernst-Planck equation simulations}\label{PNP}
The Poisson–Nernst–Planck (PNP) equations provide a time-dependent mean-field continuum framework for modeling ion transport in a wide range of systems \cite{CKC:BJ:03,daiguji:NL:2005,eisenberg2007poisson}, describing ionic transport in electrolyte solutions through the coupling of Poisson’s equation with the Nernst–Planck equations. However, the classical mean-field approximation neglects ion–ion correlation effects. To address this limitation, modified PNP models have been developed to incorporate both long-range Coulomb interactions and short-range hard-sphere (HS) correlations \cite{ma2021modified}. Furthermore, the PNP model can be reformulated within the Maxwell–Ampère Nernst–Planck (MANP) framework \cite{qiao2023maxwell} to enable charge transport modeling under a local algorithmic formulation. Within the MANP framework, the charge dynamics of a system with $M$ ionic species can be described by
\begin{equation*}
\left\{
\begin{aligned}
&\frac{\partial c^{\ell}}{\partial t} 
= \nabla \cdot \gamma^{\ell} \left( \nabla c^{\ell} - z^{\ell} c^{\ell} \bm E \right), 
\quad \ell = 1,2,\ldots,M, \\[6pt]
&\varepsilon\frac{\partial \bm E}{\partial t} 
= \sum_{\ell=1}^{M} z^{\ell} \gamma^{\ell} \left( \nabla c^{\ell} - z^{\ell} c^{\ell} {\bm E} \right) + \boldsymbol{\Theta}, \\[6pt]
&\nabla \cdot \boldsymbol{\Theta} = 0, \\[6pt]
&\nabla \times \bm E = \mathbf{0},
\end{aligned}
\right.
\end{equation*}
where $\gamma^{\ell}>0$ is the diffusion coefficient, $c^{\ell}$ is the concentration of the $\ell$-th particle and $z^{\ell}$ is its valence.  $\boldsymbol{\Theta}$ is introduced
as a degree of freedom to enforce $\nabla\times\bm E=0$, which implies the existence of
the electric potential satisfying the Poisson’s equation in a connected spatial domain.

Consider a representative numerical test case. All numerical experiments are performed using the same parameters and settings as in \cite{qiao2023maxwell}, with details provided in Appendix A.
The evolution process of particle concentration and the changes in the electric field during the evolution process has been studied. The snapshots of the cation concentration $c^1$  and electric field $|\bm E|$ at two time instants (T = 0.05 and 2) for three different local curl-free relaxation methods are shown in Fig.~\ref{1MANP}. It can be observed that with all three approaches, the anions are repelled around the negative fixed charges and subsequently diffuse to other areas. The similar evolution patterns observed for the three methods confirm that the HLR methods reproduce the expected behavior and are numerically consistent.
\begin{figure}[ht]
    \centering 
    \captionsetup[sub]{labelformat=empty}
    \begin{subfigure}{0.49\textwidth} 
        \centering
        \includegraphics[width=\linewidth]{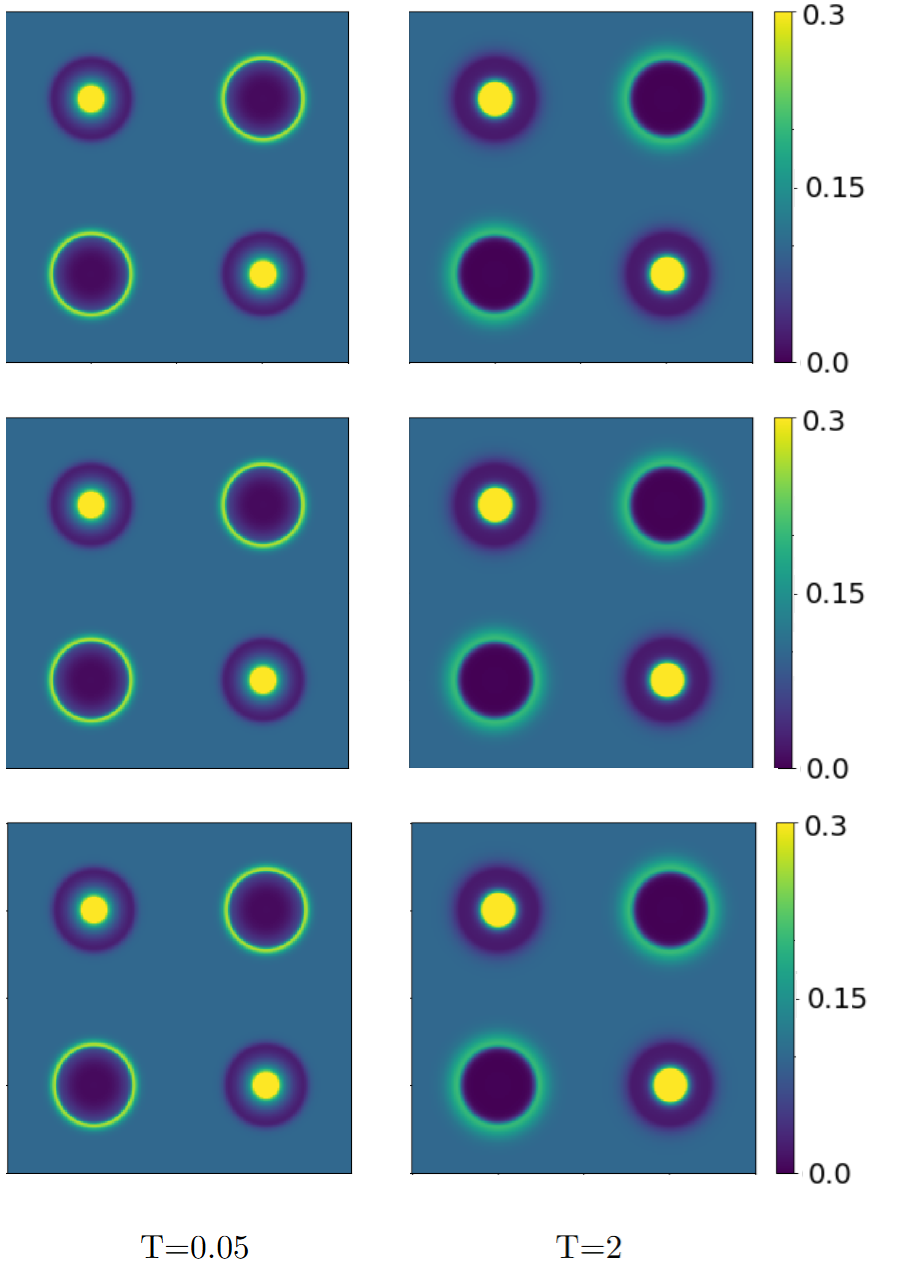} 
        \caption{(a)} 
        
    \end{subfigure}
    \begin{subfigure}{0.49\textwidth}
        \centering
        \includegraphics[width=\linewidth]{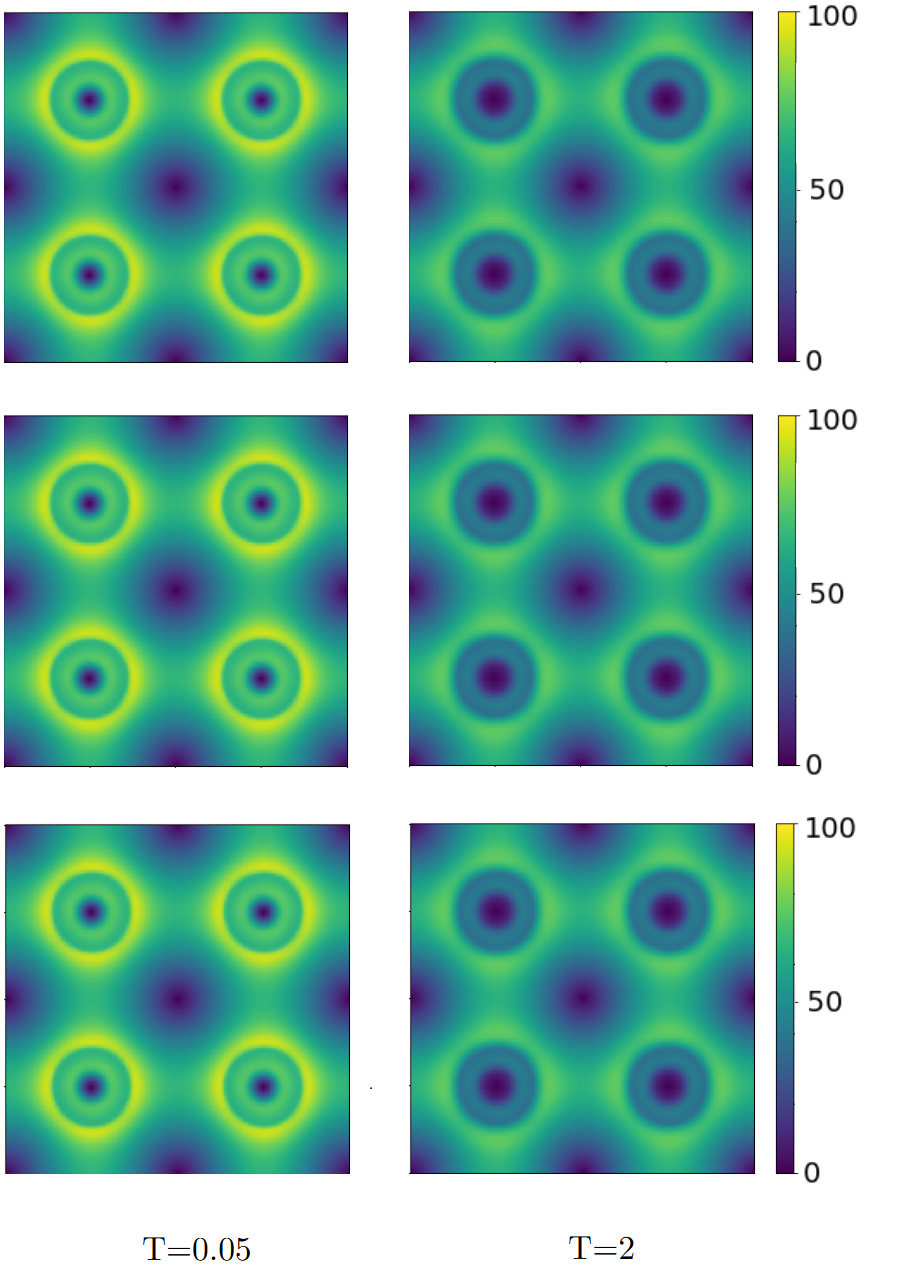}
        \caption{(b)}
       
    \end{subfigure}
    \caption{Snapshots of the cation concentration $c^1$ (a) and the magnitude of the electric field $|\bm E|$ (b) for different time. From top to bottom, the rows correspond to single mesh relaxation,  forward HLR and zigzag HLR, respectively.}
\label{1MANP}
    \end{figure}

The choice of $\boldsymbol{\Theta}$ and its impact on the efficiency of the local curl-free condition has been discussed in \cite{qiao2023maxwell}. Here, we adopt three specific forms of $\boldsymbol{\Theta}$, which is given by the following expressions,
\begin{align*}
\boldsymbol{\Theta}_{1}^{n} =& 0,\\
\boldsymbol{\Theta}_{2}^{n} =& 
\frac{\varepsilon\bm E^{n} - \varepsilon\bm E^{n-1}}{\Delta t} + \sum_{\ell=1}^{M} z^{\ell} \mathbf{J}^{\ell,n-1}
,\\
\boldsymbol{\Theta}_{3}^{n} =& \frac{3}{2} 
\left(
\frac{\varepsilon\bm E^{n} - \varepsilon\bm E^{n-1}}{\Delta t} + \sum_{\ell=1}^{M} z^{\ell} \mathbf{J}^{\ell,n-1}
\right)
- \frac{1}{2} 
\left(
\frac{\varepsilon\bm E^{n-1} - \varepsilon\bm E^{n-2}}{\Delta t} + \sum_{\ell=1}^{M} z^{\ell} J^{\ell,n-2}
\right),
\end{align*}
where $\mathbf{J}^{\ell, n} 
= -\gamma^{\ell} 
(
\nabla c^{\ell, n+1} 
- z^{\ell} c^{\ell, n+1} \bm E^{n}
)$. Let $N_x=N_y=256$, Table.~\ref{manp} presents the time of relaxation steps required to satisfy the stopping criterion $\varepsilon_{tol}=10^{-8}$ at each time step, for different choices of $\boldsymbol{\Theta}$ and update methods.


It can be observed that, for 
$\boldsymbol{\Theta}$ taking the value $\boldsymbol{\Theta}_1$ and $\boldsymbol{\Theta_3}$, the local algorithm combined with HLR methods requires only a few update rounds per time step, resulting in a significantly lower computational cost compared to the single mesh method. In these cases, the zigzag HLR method performs nearly the same number of iterations as the forward HLR method, and its higher per-iteration cost leads to the largest overall computational time. When $\boldsymbol{\Theta}=\boldsymbol{\Theta}_2$, the single mesh method requires multiple iterations only during the first few steps, after which only a single iteration per time step is needed. Similarly, the HLR methods require on average only one iteration per step. As a result, the total computational time of the single mesh method is comparable to that of the forward HLR method. Since the zigzag HLR method has a higher cost per local relaxation, it remains the most time consuming.

\begin{table}[H]
\centering
\begin{tabular}{c c c c c c c}
\hline
$\boldsymbol{\Theta}$ & &Single mesh& &  Forward HLR& &Zigzag HLR\\
\hline
$\boldsymbol{\Theta}_1$& & 0.0516s & & 0.0116s& & 0.0151s\\
$\boldsymbol{\Theta}_2$& & 0.0024s & & 0.0023s& & 0.0036s\\
$\boldsymbol{\Theta}_3$& & 0.0076s & & 0.0041s& & 0.0043s\\
\hline

\end{tabular}
\caption{Average computational time per time step for the forward HLR, zigzag HLR, and single mesh relaxation methods under different values of $\boldsymbol{\Theta}$.}
\label{manp}
\end{table}

\section{Conclusions}\label{s:Con} 

We develop a variational algorithm for solving Poisson’s equation, inspired by the local algorithm proposed by Maggs and its underlying dynamical properties. The algorithm consists of two stages. First, an initial electric field satisfying Gauss’s law is constructed, which may not be curl-free. Second, the electric field is iteratively updated through local relaxations to progressively reduce the electrostatic energy until convergence. The limited effectiveness of the single mesh relaxation in damping low-frequency curl modes motivates the development of two hierarchical schemes: the forward HLR and the zigzag HLR method. These hierarchical strategies significantly accelerate convergence by efficiently reducing low-frequency errors. We further establish a rigorous convergence proof for the proposed algorithm, and numerical experiments confirm the high efficiency of the multilevel relaxations. 
Beyond Poisson’s equation, the proposed multilevel curl-free relaxation schemes demonstrate improved convergence in a broader class of Maggs-based local algorithms, as evidenced by numerical tests on the Poisson–Boltzmann equation and by the application of the MANP framework to the Poisson–Nernst–Planck equation. Owing to the strictly local updates, the proposed algorithms are well suited for large-scale parallel implementations in particle-based simulations. Moreover, the variational and multilevel nature of the framework suggests promising extensions to coupled kinetic field systems, such as the Vlasov-Poisson equation, which will be explored in future work.

\section*{Acknowledgment}
Z. Xu and H. Zhou are financially supported by National Key R\&D Program of China (grant No. 2024YFA1012403), Natural Science
Foundation of China (grants No. {12325113} and
{12426304}) and SJTU Kunpeng \& Ascend Center
of Excellence. Q. Yin is partially supported by the Hong Kong Polytechnic University Postdoc Matching Fund Scheme 4-W425.

\section*{Data Availability}
Data will be made available on request.

\section*{Declarations}
\textbf{Competing interests} The authors have not disclosed any competing interests.

\appendix

\section{Numerical Parameters and Settings}

The benchmark in Section \ref{PNP} took the same setup as \cite{qiao2023maxwell}. The system is composed of binary ionic species. 
The fixed charge distribution is defined by the following equation, 
\begin{align*}
    \rho^{f}(x,y) = 
e^{-100\left[  \left( x + \frac{1}{2} \right)^{2} + \left( y + \frac{1}{2} \right)^{2} \right]}
- e^{-100\left[ \left( x + \frac{1}{2} \right)^{2} + \left( y - \frac{1}{2} \right)^{2} \right]} \qquad\\
- e^{-100\left[ \left( x - \frac{1}{2} \right)^{2} + \left( y + \frac{1}{2} \right)^{2} \right]}
+ e^{-100\left[ \left( x - \frac{1}{2} \right)^{2} + \left( y - \frac{1}{2} \right)^{2} \right]}.
\end{align*}
The dielectric permittivity is given by 
\[
\varepsilon(x,y) 
= \frac{\varepsilon_{\text{max}} - \varepsilon_{\text{min}}}{2} \Big[ \tanh(100d - 25) + 1 \Big] + \varepsilon_{\text{min}},
\]
where $d$ is
\[
d = 
\begin{cases}
\sqrt{ \left( x - \frac{1}{2} \right)^{2} + \left( y - \frac{1}{2} \right)^{2} }, 
& 0 < x \leq 1,\; 0 < y \leq 1, \\[6pt]
\sqrt{ \left( x - \frac{1}{2} \right)^{2} + \left( y + \frac{1}{2} \right)^{2} }, 
& 0 < x \leq 1,\; -1 \leq y \leq 0, \\[6pt]
\sqrt{ \left( x + \frac{1}{2} \right)^{2} + \left( y - \frac{1}{2} \right)^{2} }, 
& -1 \leq x \leq 0,\; 0 < y \leq 1, \\[6pt]
\sqrt{ \left( x + \frac{1}{2} \right)^{2} + \left( y + \frac{1}{2} \right)^{2} }, 
& -1 \leq x \leq 0,\; -1 \leq y \leq 0.
\end{cases}
\]
Let $\gamma^{\ell}=0.01,\ z^1=1,\ z^2=-1,\ \varepsilon_{\text{min}}=2\times10^{-4},\ \varepsilon_{\text{max}}=1.56\times 10^{-2}$ and the initial ion concentration is:
\[c^{\ell}(x,y,0)=0.1,~ \ell=1,2.\]


\bibliographystyle{plain}
\bibliography{refbib}

@article{BSD:PRE:2009,
  title                    = {Simple and robust solver for the {Poisson-Boltzmann} equation},
  author                   = {M. Baptista and R. Schmitz and B. D{\"u}nweg},
  journal                  = {Phys. Rev. E},
  year                     = {2009},
  pages                    = {016705},
  volume                   = {80}
}

@article{li2024finite,
  title={Finite-Difference Approximations and Local Algorithm for the {Poisson} and {Poisson-Boltzmann} Electrostatics},
  author={B. Li and Q. Yin and S. Zhou},
  journal={arXiv preprint arXiv:2409.15796},
  year={2024}
}

@article{CKC:BJ:03,
  title                    = {Dielectric self-energy in {Poisson-Boltzmann} and {Poisson-Nernst-Planck} models of ion channels},
  author                   = {Corry, B. and Kuyucak, S. and Chung, S.},
  journal                  = {Biophys. J.},
  year                     = {2003},
  Pages                    = {3594-3606},
  Volume                   = {84}
}

@article{daiguji:NL:2005,
  title                    = {Nanofluidic diode and bipolar transistor},
  author                   = {Daiguji, H. and Oka, Y. and Shirono, K.},
  journal                  = {Nano Lett.},
  year                     = {2005},
  pages                    = {2274--2280},
  volume                   = {5}
}

@article{duhamel1990fast,
  title={Fast {{F}}ourier transforms: a tutorial review and a state of the art},
  author={Duhamel, P. and Vetterli, M.},
  journal={Signal Process},
  volume={19},
  pages={259--299},
  year={1990}
}

@article{fahrenberger2014simulation,
  title                    = {Simulation of Electric Double Layers around Charged Colloids in Aqueous Solution of Variable Permittivity},
  author                   = {Fahrenberger, F. and Xu, Z. and Holm, C.},
  journal                  = {J. Chem. Phys.},
  year                     = {2014},
  Pages                    = {064902},
  Volume                   = {141},
}

@article{Gouy:JP:1910,
  title                    = {Constitution of the electric charge at the surface of an electrolyte},
  author                   = {G. Gouy},
  journal                  = {J. Phys.},
  year                     = {1910},
  pages                    = {457-468},
  volume                   = {9}
}

@article{honig1995classical,
  title                    = {Classical electrostatics in biology and chemistry},
  author                   = {B. Honig and A. Nicholls},
  journal                  = {Science},
  year                     = {1995},
  pages                    = {1144-1149},
  volume                   = {268}
}

@article{jiang2018improved,
  title={Improved local lattice {M}onte {C}arlo simulation for charged systems},
  author={Jiang, J. and Wang, Z.},
  journal={J. Chem. Phys.},
  volume={148},
  pages={114105},
  year={2018}
}

@article{kim2005fully,
  title={Fully implicit particle-in-cell algorithm.},
  author={Kim, H. and Chacon, L. and Lapenta, G.},
  journal={Bull. Am. Phys. Soc.},
  year={2005}
}

@article{kirsch2015mathematical,
  title={The mathematical theory of {T}ime-{H}armonic {M}axwell’s equations},
  author={Kirsch, A. and Hettlich, F.},
  journal={Appl. Math. Sci.},
 pages={20},
  volume={190},
  year={2015}
}

@article{levrel2005monte,
  title={Monte {{C}}arlo algorithms for charged lattice gases},
  author={Levrel, L. and Maggs, A. C.},
  journal={Phys. Rev. E},
  volume={72},
  pages={016715},
  year={2005}
}

@article{M:JCP:2002,
  title={Dynamics of a local algorithm for simulating {C}oulomb interactions},
  author={Maggs, A. C.},
  journal={J. Chem. Phys.},
  volume={117},
  pages={1975--1981},
  year={2002}
}

@book{mccormick1987multigrid,
  title={Multigrid methods},
  author={McCormick, S. F.},
  year={1987},
  publisher={SIAM}
}

@article{mesa1996image,
  title={Image charge method for electrostatic calculations in field-emission diodes},
  author={Mesa, G. and Dobado-Fuentes, E. and S\'{a}enz, J. J.},
  journal={J. Appl. Phys.},
  volume={79},
  pages={39--44},
  year={1996}
}

@article{liang2021high,
  title={A high-accurate fast Poisson solver based on harmonic surface mapping algorithm},
  author={Liang, J. and Liu, P. and Xu, Z.},
  journal={Commun. Comput. Phys.},
  year={2021},
  volume={30},
  pages={210-226}
}

@article{milaszewicz1987improving,
  title={Improving {J}acobi and {G}auss-{S}eidel iterations},
  author={Milaszewicz, J.},
  journal={Linear Algebra Appl.},
  volume={93},
  pages={161--170},
  year={1987}
}

@book{monk2003finite,
  title={Finite element methods for Maxwell's equations},
  author={Monk, P.},
  year={2003},
  publisher={Oxford university press}
}

@book{nussbaumer1982fast,
  title={The fast Fourier transform},
  author={Nussbaumer, H. J. },
  year={1982},
  publisher={Springer}
}

@article{pasichnyk2004coulomb,
  title                    = {Coulomb interactions via local dynamics: {A} molecular-dynamics algorithm},
  author                   = {Pasichnyk, I. and D{\"u}nweg, B.},
  journal                  = {J. Phys.: Condens. Matter},
  year                     = {2004},
  pages                    = {S3999},
  volume                   = {16}
}

@article{qiao2023maxwell,
  title={A {M}axwell--{A}mp{\`e}re {N}ernst--{P}lanck framework for modeling charge dynamics},
  author={Qiao, Z. and Xu, Z. and Yin, Q. and Zhou, S.},
  journal={SIAM J. Appl. Math.},
  volume={83},
  pages={374--393},
  year={2023},
  publisher={SIAM}
}

@article{qiao2023structure,
  title={Structure-preserving numerical method for Maxwell-Amp{\`e}re Nernst-Planck model},
  author={Qiao, Z. and Xu, Z. and Yin, Q. and Zhou, S.},
  journal={J. Comput. Phys.},
  volume={475},
  pages={111845},
  year={2023},
  publisher={Elsevier}
}

@article{qiao2025intrinsic,
  title={Intrinsic local {G}auss's law preserving {PIC} method: A self-consistent field-particle update scheme for plasma simulations},
  author={Qiao, Z. and Xu, Z. and Yin, Q. and Zhou, S.},
  journal={arXiv preprint arXiv:2506.02407},
  year={2025}
}

@article{rottler2004continuum,
  title={A continuum, {O (N)} {M}onte {C}arlo algorithm for charged particles},
  author={Rottler, J. and Maggs, A. C.},
  journal={J. Chem. Phys.},
  volume={120},
  pages={3119--3129},
  year={2004}
}

@article{roux2002theoretical,
  title={Theoretical and computational models of ion channels},
  author={Roux, B.},
  journal={Curr. Opin. Struct. Biol.},
  volume={12},
  pages={182--189},
  year={2002}
}

@incollection{ruge1987algebraic,
  title={Algebraic multigrid},
  author={Ruge, J. W. and St{\"u}ben, K.},
  booktitle={Multigrid methods},
  pages={73--130},
  year={1987},
  publisher={SIAM}
}

@article{stuben2001review,
  title={A review of algebraic multigrid},
  author={St{\"u}ben, K.},
  journal={Numerical Analysis: Historical Developments in the 20th Century},
  pages={331--359},
  year={2001}
}

@article{waisman1972mean,
  title={Mean spherical model integral equation for charged hard spheres I. Method of solution},
  author={Waisman, E. and Lebowitz, J. L.},
  journal={J. Chem. Phys.},
  volume={56},
  pages={3086--3093},
  year={1972}
}

@article{hemker1981introduction,
  title={Introduction to multigrid methods},
  author={Hemker, P. W.},
  journal={Nieuw Arch. Wisk.},
  volume={3},
  pages={71--101},
  year={1981}
}

@article{zheng2011second,
  title={Second-order {P}oisson--{N}ernst--{P}lanck solver for ion transport},
  author={Zheng, Q. and Chen, D. and Wei, G. W.},
  journal={J. Comput. Phys.},
  volume={230},
  pages={5239--5262},
  year={2011}
}

@article{ZWL:PRE:2011,
  title                    = {Mean-field description of ionic size effects with non-uniform ionic sizes: {A} numerical approach},
  author                   = {S. Zhou and Z. Wang and B. Li},
  journal                  = {Phys. Rev. E},
  year                     = {2011},
  pages                    = {021901},
  volume                   = {84}
}

@article{maggs2002local,
  title={Local simulation algorithms for {C}oulomb interactions},
  author={Maggs, A. C. and Rossetto, V.},
  journal={Phys. Rev. Lett.},
  volume={88},
  pages={196402},
  year={2002}
}

@article{izmailov2003karush,
  title={Karush-{K}uhn-{T}ucker systems: regularity conditions, error bounds and a class of {N}ewton-type methods},
  author={Izmailov, A. F. and Solodov, M.},
  journal={Mathematical Programming},
  volume={95},
  pages={631--650},
  year={2003}
}

@book{zienkiewicz1977finite,
  title={The finite element method},
  author={Zienkiewicz, O. C. and Taylor, R. L. and Nithiarasu, P. and Zhu, J. Z.},
  year={1977},
  publisher={Elsevier}
}

@book{dhatt2012finite,
  title={Finite element method},
  author={Dhatt, G. and Lefran{\c{c}}ois, E. and Touzot, G.},
  year={2012},
  publisher={John Wiley \& Sons}
}

@article{cheng2005heritage,
  title={Heritage and early history of the boundary element method},
  author={Cheng, A. H. D. and Cheng, D. T.},
  journal={Eng. Anal. Bound. Elem.},
  volume={29}, 
  pages={268--302},
  year={2005}
}

@incollection{hall1994boundary,
  title={Boundary element method},
  author={Hall, W. S.},
  booktitle={The boundary element method},
  pages={61--83},
  year={1994},
  publisher={Springer}
}

@article{van2006charge,
  title={Charge inversion at high ionic strength studied by streaming currents},
  author={van der Heyden, F. H. and Stein, D. and Besteman, K. and Lemay, S. G. and Dekker, C.},
  journal={Phys. Rev. Lett.},
  volume={96},
  pages={224502},
  year={2006}
}

@book{hockney2021computer,
  title={Computer simulation using particles},
  author={Hockney, R. W. and Eastwood, J. W.},
  year={2021},
  publisher={CRC Press}
}

@article{clapham2007calcium,
  title={Calcium signaling},
  author={Clapham, D. E.},
  journal={Cell},
  volume={131},
  pages={1047--1058},
  year={2007}
}

@article{xu2014solving,
  title={Solving fluctuation-enhanced {P}oisson--{B}oltzmann equations},
  author={Xu, Z. and Maggs, A. C.},
  journal={J. Comput. Phys.},
  volume={275},
  pages={310--322},
  year={2014}
}

@article{tu2022linear,
  title={Linear-scaling selected inversion based on hierarchical interpolative factorization for self {G}reen's function for modified {P}oisson-{B}oltzmann equation in two dimensions},
  author={Tu, Y. and Pang, Q. and Yang, H. and Xu, Z.},
  journal={J. Comput. Phys.},
  volume={461},
  pages={110893},
  year={2022}
}

@article{ma2021modified,
  title={Modified {P}oisson--{N}ernst--{P}lanck model with {C}oulomb and hard-sphere correlations},
  author={Ma, M. and Xu, Z. and Zhang, L.},
  journal={SIAM J. Appl. Math.},
  volume={81},
  pages={1645--1667},
  year={2021}
}

@article{eisenberg2007poisson,
  title={Poisson--{N}ernst--{P}lanck systems for ion channels with permanent charges},
  author={Eisenberg, B. and Liu, W.},
  journal={SIAM J. Math. Anal.},
  volume={38},
  pages={1932--1966},
  year={2007}
}

@article{maury2001fat,
  title={A fat boundary method for the {P}oisson problem in a domain with holes},
  author={Maury, B.},
  journal={J. Sci. Comput.},
  volume={16},
  pages={319--339},
  year={2001}
}

@article{sorokina2022interpolated,
  title={An interpolated {G}alerkin finite element method for the {P}oisson equation},
  author={Sorokina, T. and Zhang, S.},
  journal={J. Sci. Comput.},
  volume={92},
  pages={47},
  year={2022}
}

@article{desiderio2022cvem,
  title={{CVEM-BEM} coupling with decoupled orders for 2D exterior {P}oisson problems},
  author={Desiderio, L. and Falletta, S. and Ferrari, M. and Scuderi, L.},
  journal={J. Sci. Comput.},
  volume={92},
  pages={96},
  year={2022}
}

@article{feng2021fft,
  title={{FFT}-based high order central difference schemes for {P}oisson’s equation with staggered boundaries},
  author={Feng, H. and Long, G. and Zhao, S.},
  journal={J. Sci. Comput.},
  volume={86},
  pages={7},
  year={2021}
}

\end{document}